\documentclass[preprint,11pt]{elsarticle}





\usepackage[letterpaper,margin=1in]{geometry}
\usepackage{graphics,pictex} 
\usepackage{amsmath}
\usepackage{amsfonts}
\usepackage{amsthm}
\usepackage{amssymb}
\usepackage{bm}
\usepackage{multirow}
\usepackage{diagbox}
\usepackage{graphicx,rawfonts,latexsym,color}
\usepackage{}

\DeclareMathOperator{\ddiv}{div}
\newcommand{\Nedelec}{N\'{e}d\'{e}lec }

\usepackage{color}
\usepackage{ulem} 
\definecolor{cola}{rgb}{.0,.4,.9}

\newtheorem{theorem}{Theorem}[section]
\newtheorem{lemma}[theorem]{Lemma}

\theoremstyle{definition}
\newtheorem{definition}[theorem]{Definition}




\graphicspath{{REVISION/PicPNG/}{REVISION/PicPDF/}{REVISION/PicEPS/}}
\numberwithin{figure}{section}
\numberwithin{table}{section}



\begin{document}

\begin{frontmatter}



\title{New stabilized discretizations for poroelasticity and the Stokes' equations}


\author[UniZar]{C. Rodrigo}\ead{carmenr@unizar.es}
\address[UniZar]{Departamento de Matem\'{a}tica Aplicada, IUMA,
Universidad de Zaragoza, Zaragoza, Spain}
\author[Tufts]{X. Hu}\ead{xiaozhe.hu@tufts.edu}
\author[Tufts]{P. Ohm}\ead{peter.ohm@tufts.edu}
\address[Tufts]{Department of Mathematics, Tufts University, 
Medford, Massachusetts  02155, USA}
\author[Tufts]{J.H. Adler}\ead{james.adler@tufts.edu}
\author[CWI]{F.J. Gaspar}\ead{F.J.Gaspar@cwi.nl}
\address[CWI]{CWI, Centrum Wiskunde \& Informatica, Science Park 123, 1090 Amsterdam, The Netherlands}
\author[PennState]{L.T. Zikatanov}\ead{ludmil@psu.edu}
\address[PennState]{Department of Mathematics, Penn State, 
University Park, Pennsylvania, 16802, USA}

\begin{abstract}
  In this work, we consider the popular P1-RT0-P0
  discretization of the three-field formulation of Biot's
  consolidation problem. Since this finite-element formulation does not satisfy an inf-sup
  condition uniformly with respect to the physical parameters, several
  issues arise in numerical simulations.  For example,
  when the permeability is small with respect to the mesh size,
  volumetric locking may occur. Thus, we propose a stabilization technique that
  enriches the piecewise linear finite-element space of the
  displacement with the
  span of edge/face bubble functions. We show that for Biot's
  model this does give rise to discretizations that are uniformly stable with
  respect to the physical parameters. We also propose a perturbation of the
  bilinear form, which allows for local elimination of the bubble
  functions and provides a uniformly stable scheme with the same
  number of degrees of freedom as the classical P1-RT0-P0 approach. We prove
  optimal stability and error estimates for this
  discretization. Finally, we show that this scheme can also be
  successfully applied to Stokes' equations, yielding a discrete
  problem with optimal approximation properties and with minimum
  number of degrees of freedom (equivalent to a P1-P0
  discretization).  Numerical tests confirm the theory for both
  poroelastic and Stokes' test problems.
\end{abstract}

\begin{keyword}
Stable finite elements \sep poroelasticity \sep Stokes' equations


\end{keyword}

\end{frontmatter}


\section{Introduction}
\label{sec:intro}

The interaction between the deformation and fluid flow in a
fluid-saturated porous medium is the object of study in poroelasticity
theory. Such coupling has been modelled in the early
one-dimensional work of Terzaghi \cite{terzaghi}.  A more
general three-dimensional mathematical formulation was then established
by Maurice Biot in several pioneering publications (see
\cite{biot1} and \cite{biot2}).  Biot's models are widely used
nowadays in the modeling of many applications in different fields,
ranging from geomechanics and petroleum engineering, to biomechanics.
The existence and uniqueness of the solution for these problems have
been investigated by Showalter in \cite{showalter} and by Zenisek in
\cite{zenisek}.  
Regarding the numerical simulation of the poroelasticity equations,
there have been numerous contributions using finite-difference
schemes~\cite{Gaspar2003, Gaspar2006} and finite-volume methods (see
\cite{nordbotten_FVM} for recent developments).  Finite-element
methods, which are the subject of this work, have also been
considered  (see for
example the monograph by Lewis and Schrefler~\cite{LewisSchrefler} and
the references therein).

Stable finite-element schemes are constructed by either choosing
discrete spaces satisfying appropriate inf-sup (or LBB) conditions, or
applying suitable stabilization techniques to unstable finite-element
pairs. For the two-field (displacements-pressure) formulation of 
Biot's problem, the classical Taylor-Hood elements belong to the first
class \cite{MuradLoula92, MuradLoula94, MuradLoulaThome}, as well as
the MINI element \cite{RGHZ2016}. On the other hand, a stabilized discretization based on
linear finite elements for both displacements and pressure has been
recently analyzed in \cite{RGHZ2016}, and belongs to the second type. Regarding three-field
formulations (including the Darcy velocity), a stable finite-element method
based on non-conforming Crouzeix-Raviart finite elements for the
displacements, lowest order Raviart-Thomas-\Nedelec elements for the
Darcy velocity, and piecewise constants for the pressure,
was proposed in \cite{Hu2017}. For a four-field formulation of the
problem, which includes the stress tensor, the fluid flux, the solid
displacement, and the pore pressure as unknowns, a stable approach is
proposed in \cite{lee_four_field}. In that work, two mixed finite
elements, one for linear elasticity and one for mixed Poisson,
are coupled for the spatial discretization.

This paper focuses on the three-field formulation, which has received a lot of attention from the point
of view of novel discretizations \cite{johannes,phillips1, phillips2,
  phillips3}, as well as for the design of efficient solvers
\cite{Castelleto2016, Bause, Almani}.  Because of its application to
existing reservoir engineering simulators, one of the most frequently
considered schemes is a three-field formulation based on piecewise
linear elements for displacements, Raviart-Thomas-\Nedelec elements for
the fluid flux, and piecewise constants for the
pressure. This element, however, does not satisfy an inf-sup condition
uniformly with respect to the physical parameters of the
problem. Thus, we propose a stabilization of this popular element
which gives rise to uniform error bounds, keeping the same number of
degrees of freedom as in the original method.

A consequence of our analysis is that a new stable scheme for the
Stokes' equations is derived. The resulting method can be seen as a
perturbation of the well-known unstable pair based on piecewise linear
and piecewise constant elements for velocities and pressure,
respectively (P1-P0). This yields a stable finite-element
pair for Stokes, which has the lowest possible number of degrees of freedom.

The rest of the paper is organized as follows. Section \ref{sec:poro}
is devoted to describing Biot's problem and, in particular, the
considered three-field formulation and its discretization.  A
numerical example is given, illustrating
the difficulties that appear.
In Section \ref{sec:stab}, we introduce the stabilized
scheme in which we consider the enrichment of the piecewise linear
continuous finite-element space with edge/face (2D/3D) bubble
functions. Section \ref{sec:no_bubbles} is devoted to the local
elimination of the bubbles to maintain the same number of degrees of
freedom as in the original scheme. The well-posedness of the resulting
scheme, as well as the corresponding error analysis are also provided
here. In Section \ref{sec:stokes}, we present the
Stokes-stable finite-element method based on P1-P0 finite elements
obtained by following the same strategy as presented in the previous
sections for poroelasticity. Finally,
in Section \ref{sec:results}, we confirm the uniform convergence properties
of the stabilized schemes for both poroelasticity and Stokes' equations
through some numerical tests.


\section{Preliminaries: model problem and notation}
\label{sec:poro}

We consider the quasi-static Biot's model for 
consolidation in a linearly elastic, homogeneous, and 
isotropic porous medium saturated by an incompressible Newtonian fluid.  
According to Biot's theory~\cite{biot1}, 
the mathematical model of the consolidation process is described by 
the following system of partial differential equations (PDEs)
in a domain $\Omega\subset\mathbb{R}^d$, $d=2,3$ with sufficiently smooth 
boundary $\Gamma=\partial\Omega$:
\begin{eqnarray}
\mbox{\rm equilibrium equation:} & & -{\rm div} \, {\boldsymbol
\sigma}' +
\alpha \nabla \, p = \rho {\bm g}, \quad {\rm in} \, \Omega, \label{eq11} \\
\mbox{\rm constitutive equation:} & & \bm{\sigma}' = 2\mu {\boldsymbol \varepsilon}(\bm{u})  + \lambda\ddiv(\bm{u}) {\bm I}, \quad {\rm in} \, \Omega,
\label{eq12} \\
\mbox{\rm compatibility condition:} & & {\boldsymbol \varepsilon}({\bm u}) = \frac{1}{2}(\nabla {\bm u} + \nabla
{\bm u}^t), \quad {\rm in} \, \Omega,
\label{eq13} \\
\mbox{\rm Darcy's law:} & & {\bm w} = - \displaystyle \frac{1}{\mu_f}{\bm K} ( \nabla
p - \rho_f {\bm g}), \quad {\rm in} \, \Omega,
\label{eq14} \\
\mbox{\rm continuity equation:} & & \displaystyle \frac{\partial}{\partial t}\left( \frac{1}{M}p +
\alpha \ {\ddiv} \, {\bm u}\right)  + {\ddiv} \, {\bm w} = f, \quad {\rm in} \, \Omega,
\label{eq15}
\end{eqnarray}
where $\lambda$ and $\mu$ are the Lam\'{e} coefficients, $M$ is the
Biot modulus, and $\displaystyle \alpha = 1-\frac{K_b}{K_s}$ is the
Biot-Willis constant. Here, $K_b$ and $K_s$ denote the drained and the
solid phase bulk moduli. As is customary, ${\bm K}$ stands for the absolute
permeability tensor, $\mu_f$ is the viscosity of the fluid, and
$\boldsymbol I$ is the identity tensor.  The unknown functions are the
displacement vector ${\bm u}$ and the pore pressure $p$. 
The effective stress tensor and the strain tensor are
denoted by ${\boldsymbol \sigma}'$ and $\boldsymbol \varepsilon$,
respectively.  The percolation velocity of the fluid, or Darcy's
velocity, relative to the
soil is denoted by $\bm w$ and the vector-valued function ${\bm g}$ represents
the gravitational force. The bulk density is
$\rho = \phi \rho_f + (1-\phi)\rho_s$, where $\rho_s$ and $\rho_f$ are
the densities of solid and fluid phases and $\phi$ is the
porosity. Finally, the source term $f$ represents a forced fluid
extraction or injection process.

Our focus here is on the so-called three-field formulation in which Darcy's
velocity, ${\bm w}$, is also a primary unknown in addition to ${\bm u}$ and $p$.  
As a result, we have the following system of PDEs:
\begin{eqnarray} 
&& -\ddiv\bm{\sigma}' + \alpha \nabla p = \rho \bm{g},\qquad
\mbox{where} \quad
  \bm{\sigma}' = 2\mu  {\boldsymbol \varepsilon}(\bm{u})  + \lambda\ddiv(\bm{u}) \bm{I}, \label{three-field1}\\
&& {\bf K}^{-1}\mu_f {\bm w} + \nabla p = \rho_f {\bm g}, \label{three-field2}\\
&& \displaystyle \frac{\partial}{\partial t}\left( \frac{1}{M}p + \alpha \ {\ddiv} \, {\bm u}\right)  + {\ddiv} \, {\bm w} = f.
\label{three-field3}
\end{eqnarray}
This system is often subject to the following set of boundary conditions:
\begin{eqnarray}
&&  p = 0, \quad \mbox{for}\quad x\in\overline{\Gamma}_t,
\quad \boldsymbol \sigma' \, {\bm n} = {\bm 0}, \quad 
\quad \mbox{for}\quad x\in\Gamma _t, \label{bound-cond-t}\\
&& {\bm u} = {\bm 0}, \quad  \mbox{for}\quad x\in \overline{\Gamma}_c,\quad
{\bm w}\cdot {\bm n} = 0, \quad \mbox{for}\quad x\in \Gamma_c,
\label{bound-cond-c}
\end{eqnarray}
where ${\bm n}$ is the outward unit normal to the boundary,
$\overline{\Gamma}=\overline \Gamma_t \cup \overline \Gamma_c$, with
$\Gamma_t$ and $\Gamma_c$ being open (with respect to $\Gamma$) subsets of
$\Gamma$ with nonzero measure. In the following, we omit the symbol
``$\overline{~~}$'' over $\Gamma_t$ and $\Gamma_c$ as it will be clear
from the context that the essential boundary conditions are imposed on
closed subsets of $\Gamma$. Other sets of boundary conditions are also
of interest, such as full Dirichlet conditions for both $p$ and
$\bm{u}$ on all of $\Gamma$.

The initial condition at $t=0$ is given by,
\begin{equation}\label{ini-cond}
      \left( \frac{1}{M}p + \alpha \ddiv {\bm u} \right)\, (\bm{x},0)=0, \,  \bm{x} \in\Omega,
\end{equation}
which yields the following mixed formulation 
of Biot's three-field consolidation model:\\ 
For each $t\in (0,T]$, find
$(\bm u(t),\bm w(t), p(t))\in \bm V \times \bm W \times Q$ such that
\begin{eqnarray}
  && a(\bm{u},\bm{v}) - (\alpha p,\ddiv \bm{v}) = (\rho\bm g,\bm{v}),
     \quad \forall \  \bm{v}\in \bm V, \label{variational1}\\
  && ({\bm K}^{-1}\mu_f\bm{w},\bm{r}) - (p,\ddiv \bm{r}) = (\rho_f {\bm g}, \bm{r}), \quad \forall \ \bm{r}\in \bm W,\label{variational2}\\
  &&  \left(\frac{1}{M}\frac{\partial p}{\partial t} ,q \right)+\left(\alpha\ddiv \frac{\partial \bm{u}}{\partial t},q\right) + (\ddiv \bm{w},q)   = (f,q), \quad \forall \ q \in Q,\label{variational3}
\end{eqnarray}
where,
\begin{equation}\label{bilinear}
a(\bm{u},\bm{v}) = 
2\mu\int_{\Omega}{\boldsymbol \varepsilon}(\bm{u}):{\boldsymbol \varepsilon}(\bm{v}) +
\lambda\int_{\Omega} \ddiv\bm{u}\ddiv\bm{v},
\end{equation}
corresponds to linear elasticity. The function spaces used in the
variational form are
\begin{eqnarray*}
&&{\bm V} = \{{\bm u}\in {\bm H}^1(\Omega) \ |  \ {\bm u}|_{\overline{\Gamma}_c} = {\bm 0} \},\\
&&{\bm W} = \{{\bm w} \in \bm{H}(\ddiv,\Omega) \ | \  ({\bm w}\cdot {\bm n})|_{\Gamma_c} = 0\}, \\
&&Q = L^2(\Omega),
\end{eqnarray*} 
where ${\bm H}^1(\Omega)$ is the space of square integrable
vector-valued functions whose first derivatives are also square
integrable, and $\bm{H}(\ddiv,\Omega)$ contains the square integrable
vector-valued functions with square integrable divergence.

We recall that the well-posedness of the continuous problem was
established by Showalter~\cite{showalter}, and, for the three-field
formulation by Lipnikov~\cite{lipnikov_phd}. Next, we focus on the
behavior of some classical discretizations of Biot's model
.

\subsection{Discretizations}\label{section:discrete}
First, we partition the domain $\Omega$ into
$n$-dimensional simplices and denote the resulting partition with
$\mathcal T_h$, i.e.,
$\overline \Omega = \cup_{T\in \mathcal{T}_h} \overline T$. Further,
with every simplex $T\in \mathcal T_h$, we associate two quantities
which characterize its shape: the diameter of $T$,
$h_T=\operatorname{diam}(T)$, and the radius, $\rho_T$, of the
$n$-dimensional ball inscribed in $T$. The simplicial mesh is {\it
  shape regular} if and only if $h_T/\rho_T\lesssim 1$ uniformly with  
respect to $T$. 

With the partitioning, $\mathcal{T}_h$, we associate a triple of piecewise
polynomial, finite-dimensional spaces,
\begin{equation}\label{include}
\bm{V}_h\subset \bm{V}, \quad \bm{W}_h\subset \bm{W}, \quad 
Q_h \subset Q.
\end{equation}
While we specify two choices of the space $\bm{V}_h$ later, 
we fix $\bm{W}_h$ and $Q_h$ as follows,
\begin{eqnarray*}
  &&{\bm W}_h = \{{\bm w}_h \in \bm{W} \ | \  {\bm w}_h|_T = {\bm a} + \eta {\mathbf x}, \ {\bm a}\in {\mathbb R}^d, ~\eta\in {\mathbb R}, \ \ \forall\, T\in \mathcal{ T}_h \}, \\
  &&Q_h =\{q_h \in Q \ | \ q_h|_T \in {\mathbb P}_0(T), \ \ \forall\, T \in \mathcal{ T}_h \},
\end{eqnarray*} 
where ${\mathbb P}_0(T)$ is the one-dimensional space of constant
functions on $T$.  We note that the inclusions listed
in~\eqref{include} imply that the elements of $\bm{V}_h$ are
continuous on $\Omega$, the functions in $\bm{W}_h$ have continuous
normal components across element boundaries, and that the functions in $Q_h$
are in $L^2(\Omega)$.  This choice of $\bm{W}_h$ is the 
standard lowest order Raviart-Thomas-\Nedelec space (RT0) and $Q_h$ is 
the piecewise constant space (P0). 

Finally, using backward Euler as a time discretization on a
time interval $(0,t_{\max{}}]$ with constant time-step size
$\tau$, the discrete scheme corresponding to the three-field
formulation~\eqref{variational1}-\eqref{variational3} reads:\\
Find $(\bm u_h^m,\bm w_h^m, p_h^m)\in \bm V_h \times \bm W_h \times Q_h$
such that
\begin{eqnarray}
  && a(\bm{u}_h^m,\bm{v}_h) - (\alpha p_h^m,\ddiv \bm{v}_h) = (\rho\bm g,\bm{v}_h),
     \quad \forall \  \bm{v}_h\in \bm V_h, \label{discrete1}\\
  && \tau({\bm K}^{-1}\mu_f\bm{w}_h^m,\bm{r}_h) - \tau(p_h^m,\ddiv \bm{r}_h) = \tau(\rho_f {\bm g}, \bm{r}_h), \quad \forall \ \bm{r}_h\in \bm W_h,\label{discrete2}\\
  &&  \left(\frac{1}{M} p^m_h ,q_h \right)+\left(\alpha\ddiv \bm{u}_h^m,q_h\right) + \tau(\ddiv \bm{w}_h^m,q_h)   = (\widetilde{f}, q_h), \quad \forall \ q_h \in Q_h,\label{discrete3}
\end{eqnarray}
where
$(\widetilde{f}, q_h) =\tau(f,q_h) + \left(\frac{1}{M} p^{m-1}_h ,q_h
\right)+\left(\alpha\ddiv \bm{u}_h^{m-1},q_h\right)$,
and, 
\[
(\bm{u}_h^m, \bm{w}_h^m, p_h^m) \approx 
\left(\bm{u}(\cdot, t_m),\bm{w}(\cdot, t_m), p(\cdot, t_m)\right), 
\quad t_m = m\tau, \ m = 1,2,\ldots
\] 

 \subsection{Effects of permeability on the error of
   approximation}\label{sec:difficulties}
For $\bm{V}_h$, we start with a popular
finite-element approximation for~\eqref{variational1}--\eqref{variational3} by choosing
\begin{eqnarray*}
\bm{V}_h=\bm{V}_{h,1},\quad \mbox{with}\quad
\bm{V}_{h,1} &:=& \{{\bm v}_h\in {\bm V} \; \big|  \; {\bm v}_h|_T \in [{\mathbb P}_1(T)]^d,~\mbox{for all}~T\in \mathcal{ T}_h \},
\end{eqnarray*} 
where $\mathbb{P}_1(T)$ is the space of linear polynomials on $T\in \mathcal{T}_h$. 
Then, $\bm{V}_{h,1}$  is the space of
 piecewise linear (with respect to $\mathcal{T}_h$), continuous
 vector-valued functions.
For uniformly positive definite permeability tensor, $\bm{K}$,
such choice of spaces has been successfully employed for
numerical simulations of Biot's consolidation model
(see~\cite{phillips2, lipnikov_phd}). However, the heuristic considerations that expose some of the issues with this
discretization are observed in cases when 
$\bm{K}\to \bm{0}$. In such cases, $\bm w\to\bm 0$ and the
discrete problem approaches a P1-P0 discretization of the
Stokes' equation. As it is well known, the element pair,
$\bm{V}_{h,1}\times Q_h$, does not satisfy the inf-sup
condition and is unstable for the Stokes' problem.  In fact, on a
uniform grid in $2D$, it is easy to prove that
volumetric locking occurs, namely,
that the only divergence-free function from $\bm{V}_{h,1}$ is the zero
function.  More precisely,
\[
\operatorname{dim}(Q_h) > 
\operatorname{dim} \bm{V}_h >
\operatorname{dim} \operatorname{Range}(\operatorname{div}_h), 
\quad \operatorname{div}_h = \operatorname{div}\big|_{\bm{V}_h}.
\]
These inequalities imply that $\operatorname{div}_h$ is not an onto
operator, and, hence, the pair of spaces violates the inf-sup
condition~~\cite{1991BrezziF_FortinM-aa} associated with the discrete
Stokes' problem.  More details on this undesirable phenomenon for Stokes are found in the classical
monograph~\cite{1991BrezziF_FortinM-aa} and also in
\cite[pp.~45--100]{2008BoffiD_BrezziF_DemkowiczL_DuranR_FalkR_FortinM-aa}
and \cite{2013BoffiD_BrezziF_FortinM-aa}.

Here, we demonstrate numerically that for Biot's model, the error in
the finite-element approximation does not decrease when the permeability is small
relative to the mesh size. We consider 
$\Omega=(0,1)\times (0,1)$, and approximate
\eqref{variational1}-\eqref{variational3} subject to Dirichlet boundary
conditions for both $\bm{u}$ and $p$ on the whole of $\Gamma$. 
We cover $\Omega$ with a uniform triangular grid by 
dividing an $(N\times N)$ uniform 
square mesh into right triangles.  The material parameters are 
$\lambda = 2$, $\mu = 1$, $\mu_f = 1$, $\alpha = 1$, and $M = 10^6$. We consider
a diagonal permeability tensor ${\bm K} = k {\bm I}$ with constant
$k$, and the other data is set so that the exact solution is given by
\begin{eqnarray*}
\bm{u}(x,y,t) &=& \operatorname{curl} \varphi = 
\begin{pmatrix}\partial_y \varphi \\ -\partial_x \varphi \end{pmatrix}, 
\quad \varphi (x,y) = [xy(1-x)(1-y)]^2, \\
p(x,y,t) &=& 1.
\end{eqnarray*}
Finally, we set $\tau = 1$ and $t_{\max{}}=1$, so that we only perform one time step. 

As seen in
Table~\ref{errors_difficulties} the energy norm ($\| \bm{v} \|_A^2 := a(\bm{v}, \bm{v}) $ for $\bm{v} \in \bm{V}$) for the displacement
errors and the $L^2$-norm for
pressure errors do not decrease until the mesh size
is sufficiently small (compared with the permeability). Thus for small
permeabilities, this could
result in expensive discretizations which are less applicable to
practical situations.
\begin{table}[htb!]
\begin{center}
\begin{tabular}{|c|c|c|c|c|c|c|}
\cline{1-6}
\diagbox{$\kappa$}{$N$}&
$8$ & $16$ & $32$ & $64$ & $128$ &\multicolumn{1}{c}{~} \\
\hline
\multirow{2}{*}{$10^{-4}$} 
& $0.0187$ & $0.0040$ & $0.0009$ & $0.0002$ & $5.66\times 10^{-5}$ 
& $\|\Pi_1{\bm u}-{\bm u}_h\|_A$ \\
& $0.0590$ & $0.0090$ & $0.0016$ & $0.0003$ & $8.33\times 10^{-5}$ & $\|\Pi_0p-p_h\|_{L^2}$ \\
\hline
\multirow{2}{*}{$10^{-6}$} 
& $0.0547$ & $0.0302$ & $0.0050$ & $0.0005$ & $8.93\times 10^{-5}$ & $\|\Pi_1{\bm u}-{\bm u}_h\|_A$ \\
& $0.3187$ & $0.3098$ & $0.0741$ & $0.0097$ & $0.0012$ & $\|\Pi_0p-p_h\|_{L^2}$ \\
\hline
\multirow{2}{*}{$10^{-8}$} 
& $0.0578$ & $0.0567$ & $0.0476$ & $0.0165$ & $0.0018$ & $\|\Pi_1{\bm u}-{\bm u}_h\|_A$ \\ 
& $0.3388$ & $0.7067$ & $1.1418$ & $0.6450$ & $0.1142$ & $\|\Pi_0p-p_h\|_{L^2}$  \\
\hline
\multirow{2}{*}{$10^{-10}$} 
& $0.0578$ & $0.0574$ & $0.0570$ & $0.0550$ & $0.0502$ & $\|\Pi_1{\bm u}-{\bm u}_h\|_A$ \\ 
& $0.3372$ & $0.7176$ & $1.4527$ & $2.7790$ & $3.4403$ & $\|\Pi_0p-p_h\|_{L^2}$ \\
\hline
\end{tabular}
\caption{Energy norm and $L^2$-norm for displacement and pressure
  errors, respectively. Hydraulic conductivity: $\kappa
  =(k/\mu_f)=10^{-2\nu}$, $\nu = 2,\ldots,5$; grid parameters:
  $h=2^{-l}$ and $N=2^l$, $l=3,\ldots,7$.  Results confirm poor
 approximation when $\kappa/h$ is small, where $\kappa = k/\mu_f$.
\label{errors_difficulties}}
\end{center}
\end{table}

\section{Stabilization and perturbation of the bilinear form}\label{sec:stab}
To resolve the above issue, we introduce a well-known stabilization technique
based on enrichment of the piecewise linear continuous finite-element
space, $\bm{V}_{h,1}$, with edge/face (2D/3D) bubble functions
(see~\cite[pp. 145-149]{GR1986}).  The discretization described below is
based on a Stokes-stable pair of spaces $(\bm{V}_h,Q_h)$ with
$\bm{V}_h \supset \bm{V}_{h,1}$.  As we show later, in
Section~\ref{sec:no_bubbles}, this stabilization gives a proper finite-element approximation of the solution of Biot's model
independently of the size of the hydraulic conductivity, $\kappa$.

\subsection{Stabilization by face bubbles}
To define the enriched space, following~\cite{GR1986}, consider
the set of $(d-1)$ dimensional faces from $\mathcal{T}_h$ and denote
this set by $\mathcal{E}= \mathcal{E}^{o}\cup\mathcal{E}^{\partial}$,
where $\mathcal{E}^{o}$ is the set of interior faces (shared by two
elements) and $ \mathcal{E}^{\partial}$ is the set of faces on the
boundary.  In addition, $\mathcal{E}^{\Gamma_t}$ is the set of faces on the boundary $\Gamma_t$ and $\mathcal{E}^{o,t} = \mathcal{E}^{o} \cap \mathcal{E}^{\Gamma_t}$.  
Note, if $\Gamma_t = \partial \Omega$ (pure traction boundary condition), then $\mathcal{E}^{\Gamma_t} = \mathcal{E}^{\partial}$ and $\mathcal{E}^{o,t} = \mathcal{E}$.
For any face $e\in \mathcal{E}^{o}$, such that
$e\in \partial T$, and $T\in \mathcal{T}_h$, let $\bm{n}_{e,T}$ be the
outward (with respect to $T$) unit normal vector to $e$.  With every
face $e\in \mathcal{E}^{o}$, we also associate a unit vector $\bm{n}_e$
which is orthogonal to it. Clearly, if $e\in \partial T$ we have
$\bm{n}_e = \pm \bm{n}_{e,T}$.  For the boundary faces
$e\in \mathcal{E}^{\partial}$, we always set $\bm{n}_e=\bm{n}_{e,T}$,
where $T$ is the \emph{unique} element for which we have
$e\subset \partial T$.  For the interior faces, the particular
direction of $\bm{n}_e$ is not important, although it is important
that this direction is fixed.  More precisely,
\begin{equation}
  \bm{n}_e = \bm{n}_{e,T^{+}} = -\bm{n}_{e,T^{-}}
  \quad\mbox{if}\quad e=T^{+}\cap T^{-}, \quad \mbox{and}\quad T^{\pm}\in \mathcal{T}_h,
\end{equation}
Further, with every face
$e\in \mathcal{E}$, $e=T^{+}\cap T^{-}$, we associate a 
vector-valued function $\bm{\Phi}_{e}$,
\begin{equation}\label{e:def-phie}
\bm{\Phi}_{e} = \varphi_e\bm{n}_{e}, 
\quad\mbox{with}\quad 
\varphi_e\bigg|_{T^{\pm}} = \varphi_{e,T^{\pm}}, 
\quad\mbox{and}\quad 
\varphi_{e,T^{\pm}} = \prod_{k=1, k\neq j^{\pm}}^{d+1}\lambda_{k,T^{\pm}},
\end{equation}
where $\lambda_{k,T^{\pm}}$, $k=1,\ldots,(d+1)$ are barycentric
coordinates on $T^{\pm}$ and $j^{\pm}$ is the vertex opposite to the face
$e$ in $T^{\pm}$.  We note that $\bm{\Phi}_e\in {\bm V}$ is a
continuous piecewise polynomial function of degree $d$.

Finally, the stabilized finite-element space $\bm{V}_h$ is defined as
\begin{equation}\label{space_u}
{\bm V}_h = \bm{V}_{h,1} \oplus\bm{V}_b, \quad 
\bm{V}_b=\operatorname{span}\{\bm{\Phi}_e\}_{e\in  \mathcal{E}^{o,t}}. 
\end{equation}
The degrees of freedom associated with $\bm{V}_h$ are the values
at the vertices of $\mathcal{T}_h$ and the total flux through
$e\in \mathcal{E}^{o,t}$ of $(I-\Pi_1){\bm{v}}_h$, where $\Pi_1$ is the standard piecewise linear interpolant,
$\Pi_1: C(\overline{\Omega})\mapsto \bm{V}_{h,1}$. Then, the canonical interpolant,
$\Pi: C(\overline{\Omega})\mapsto \bm{V}_h$, is defined as:
\[
\Pi \bm{v} = \Pi_1 \bm{v} + \sum_{e\in \mathcal{E}^{o,t}} v_e \bm{\Phi}_e,\quad
v_e = \frac{1}{|e|}\int_e (I-\Pi_1)\bm{v}.
\]
With this choice of $\bm{V}_h$,  the variational
form,~\eqref{discrete1}--\eqref{discrete3}, remains the same and we have the following block form of the
discrete problem:
\begin{equation}\label{block_form}
\mathcal{ A} \left(
\begin{array}{c} 
{\bm U}_b \\ 
{\bm U}_l \\ 
{\bm W} \\ 
{\bm P} 
\end{array}
\right) = 
{\bm b}, \ \ \hbox{with} \ \ 
\mathcal{ A} = \left( 
\begin{array}{cccc} 
A_{bb} & A_{bl} & 0 & G_b \\ 
A_{bl}^T & A_{ll} & 0 & G_l \\ 
0 & 0 & \tau M_w & \tau G \\
G_b^T & G_l^T & \tau G^T & -M_p
\end{array} 
\right),
\end{equation}
where ${\bm U}_b$, ${\bm U}_l$, ${\bm W}$ and ${\bm P}$ are the
unknown vectors for the bubble components of the displacement, the
piecewise
linear components of the displacement, the Darcy velocity, and the
pressure, respectively. The 
blocks in the definition of $\mathcal{A}$ correspond to the following
bilinear forms: 
\begin{eqnarray*}
  && a(\bm{u}_h^b,\bm{v}_h^b) \rightarrow A_{bb}, \quad a(\bm{u}_h^l,\bm{v}_h^b) \rightarrow A_{bl}, \quad  a(\bm{u}_h^l,\bm{v}_h^l) \rightarrow A_{ll}, \\
   &&-(\alpha p_h,\ddiv \bm{v}_h^b) \rightarrow G_b,  \quad   -(\alpha
   p_h,\ddiv \bm{v}_h^l) \rightarrow G_l, \quad -(p_h,\ddiv \bm{r}_h) \rightarrow G,\\
  && (K^{-1}\mu_f\bm{w}_h,\bm{r}_h) \rightarrow  M_w, \quad
 \left(\frac{1}{M} p_h ,q_h \right) \rightarrow M_p, 
\end{eqnarray*}
where ${\bm u}_h = {\bm u}_h^l+{\bm u}_h^b$,
${\bm u}_h^l\in \bm{V}_{h,1}$, ${\bm u}_h^b\in \bm{V}_b$, and an analogous
decomposition for $\bm{v}_h$.


Next, we define the following notion of stability for discretizations of Biot's
model needed for the analysis.
\begin{definition}\label{d:stokes-biot-stable}
  The triple of spaces $(\widetilde{\bm{V}}_h,\widetilde{\bm{W}}_h,\widetilde{Q}_h)$ is
  \emph{Stokes-Biot stable} if and only if the following conditions
  are satisfied:
\begin{itemize}
\item $a(\bm{u}_h, \bm{v}_h)  \leq C_{\bm{V}} \| \bm{u}_h \|_1 \| \bm{v}_h \|_1$, for all $\bm{u}_h\in \widetilde{\bm{V}}_h$,
  $\bm{v}_h \in \widetilde{\bm{V}}_h$;
\item $a(\bm{u}_h, \bm{u}_h)  \geq \alpha_{\bm{V}} \| \bm{u}_h \|_1^2$, for all $\bm{u}_h\in \widetilde{\bm{V}}_h$;
\item The pair of spaces $(\widetilde{\bm{W}}_h,\widetilde{Q}_h)$ is
  Poisson stable, i.e., it satisfies stability and continuity
  conditions required by the mixed discretization of the Poisson
  equation;
\item The pair of spaces $(\widetilde{\bm{V}}_h,\widetilde{Q}_h)$ is Stokes stable. 
\end{itemize}
Here, $\| \cdot \|_1$ and $\| \cdot \|$ denote the standard
${\bm H}^1$ norm and $L^2$ norm, respectively.
\end{definition}
To show how stability for the Biot's system follows from the conditions above, 
we introduce a norm on $\bm{V}_h\times {\bm W}_h \times Q_h$:
\begin{equation}\label{norm}
\|({\bm u}_h,{\bm w}_h,p_h) \|_{\tau} = \left( \| {\bm u}_h \|_1^2 + \tau \| {\bm w}_h \|^2 + \tau^2 \| \ddiv {\bm w}_h\|^2 + \| p_h \|^2 \right)^{1/2},
\end{equation}
Further, we
associate a composite bilinear form on the space,
$\bm{V}_h\times {\bm W}_h \times Q_h$,
\begin{eqnarray*}
B(\bm{u}_h, \bm{w}_h, p_h; \bm{v}_h, \bm{r}_h, q_h) & := & 
a(\bm{u}_h,\bm{v}_h) - (\alpha p_h,\ddiv \bm{v}_h)  + \tau(K^{-1}\mu_f\bm{w}_h,\bm{r}_h)  - \tau(p_h,\ddiv \bm{r}_h) \\
&& \quad - \left(\frac{1}{M} p_h ,q_h \right)-\left(\alpha\ddiv \bm{u}_h,q_h\right) - \tau(\ddiv \bm{w}_h,q_h).
\end{eqnarray*}
We then have the following theorem which shows that on every time step
the discrete problem is solvable. 
\begin{theorem}\label{t:stable1} 
If the triple $(\bm{V}_h, \bm{W}_h, Q_h)$ is Stokes-Biot stable, then: 
\begin{itemize}
\item[] $B(\cdot, \cdot, \cdot \ ; \ \cdot, \cdot, \cdot)$ is
  continuous with respect to $\|(\cdot,\cdot,\cdot)\|_\tau$; and
\item[] the following inf-sup condition holds.
\begin{equation}\label{infsup-B}
\sup_{(\bm{v}_h, \bm{r}_h, q_h) \in {\bm V}_h\times \bm W_h\times Q_h}  \frac{B(\bm{u}_h, \bm{w}_h, p_h; \bm{v}_h, \bm{r}_h, q_h)}{\|  \left( \bm{v}_h,  \bm{r}_h,  q_h  \right) \|_{\tau}} \geq \gamma \| \left( \bm{u}_h,  \bm{w}_h, p_h \right) \|_{\tau},
\end{equation}
with a constant $\gamma > 0$ independent of mesh size $h$ and time
step size $\tau$.
\end{itemize}
\end{theorem}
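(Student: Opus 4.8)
The plan is to establish the two claims of the theorem separately, both by reducing to the four assumptions bundled in the definition of Stokes--Biot stability. Continuity of $B$ with respect to $\|(\cdot,\cdot,\cdot)\|_\tau$ is the easy half: I would bound each of the seven terms in the definition of $B$ using Cauchy--Schwarz together with the continuity bound $a(\bm{u}_h,\bm{v}_h)\le C_{\bm{V}}\|\bm{u}_h\|_1\|\bm{v}_h\|_1$ from the first bullet of Definition~\ref{d:stokes-biot-stable}, the boundedness of $\ddiv:\bm{H}^1\to L^2$ and of $\ddiv:\bm{H}(\ddiv)\to L^2$, and the elementary facts $\|K^{-1}\mu_f\bm{w}_h\|\lesssim\|\bm{w}_h\|$ and $\frac1M\|p_h\|^2\ge0$. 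The weights $\tau$ and $\tau^2$ attached to the Darcy terms in the norm~\eqref{norm} are exactly matched to the factors of $\tau$ multiplying those terms in $B$, so each product is controlled by $\|(\bm{u}_h,\bm{w}_h,p_h)\|_\tau\,\|(\bm{v}_h,\bm{r}_h,q_h)\|_\tau$ up to a constant independent of $h$ and $\tau$.

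For the inf-sup bound~\eqref{infsup-B}, I would use the standard strategy for such composite saddle-point forms: given $(\bm{u}_h,\bm{w}_h,p_h)$, construct an explicit test triple $(\bm{v}_h,\bm{r}_h,q_h)$ whose norm is controlled by $\|(\bm{u}_h,\bm{w}_h,p_h)\|_\tau$ and for which $B(\bm{u}_h,\bm{w}_h,p_h;\bm{v}_h,\bm{r}_h,q_h)\gtrsim\|(\bm{u}_h,\bm{w}_h,p_h)\|_\tau^2$. A first, diagonal choice $(\bm{v}_h,\bm{r}_h,q_h)=(\bm{u}_h,\bm{w}_h,-p_h)$ gives $B = a(\bm{u}_h,\bm{u}_h)+\tau(K^{-1}\mu_f\bm{w}_h,\bm{w}_h)+\frac1M\|p_h\|^2$, which by coercivity of $a$ (second bullet) and positive-definiteness of $K^{-1}$ controls $\|\bm{u}_h\|_1^2+\tau\|\bm{w}_h\|^2$ but gives no control over $\|p_h\|$ (when $M\to\infty$) nor over $\tau^2\|\ddiv\bm{w}_h\|^2$. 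To recover the pressure, I would use Stokes stability of $(\bm{V}_h,Q_h)$: there is $\bm{z}_h\in\bm{V}_h$ with $-(\,\ddiv\bm{z}_h,p_h)\gtrsim\|p_h\|^2$ and $\|\bm{z}_h\|_1\lesssim\|p_h\|$, and adding $\delta_1\bm{z}_h$ to the displacement test function produces a $+\delta_1\alpha\|p_h\|^2$ contribution, the cross terms $a(\bm{u}_h,\bm{z}_h)$ and $(\ddiv\bm{u}_h,p_h)$ being absorbed by Young's inequality at the cost of the already-controlled $\|\bm{u}_h\|_1^2$. To recover $\tau^2\|\ddiv\bm{w}_h\|^2$, I would use Poisson (mixed) stability of $(\bm{W}_h,Q_h)$: since $\ddiv\bm{w}_h\in Q_h$, there is $\bm{s}_h\in\bm{W}_h$ with $-(\ddiv\bm{s}_h,\ddiv\bm{w}_h)\gtrsim\|\ddiv\bm{w}_h\|^2$ and $\|\bm{s}_h\|_{\ddiv}\lesssim\|\ddiv\bm{w}_h\|$; adding $\delta_2\tau\bm{s}_h$ to the Darcy test function yields $+\delta_2\tau^2\|\ddiv\bm{w}_h\|^2$, with the new cross term $\tau(K^{-1}\mu_f\bm{w}_h,\bm{s}_h)$ absorbed into $\tau\|\bm{w}_h\|^2$ and the term $\tau(\ddiv\bm{s}_h,p_h)$ handled by Young against $\|p_h\|^2$. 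Choosing the combination $(\bm{v}_h,\bm{r}_h,q_h)=(\bm{u}_h+\delta_1\bm{z}_h,\ \bm{w}_h+\delta_2\tau\bm{s}_h,\ -p_h)$ and then fixing $\delta_1,\delta_2$ small enough, in the right order, gives $B\gtrsim\|(\bm{u}_h,\bm{w}_h,p_h)\|_\tau^2$; since also $\|(\bm{v}_h,\bm{r}_h,q_h)\|_\tau\lesssim\|(\bm{u}_h,\bm{w}_h,p_h)\|_\tau$, dividing yields~\eqref{infsup-B} with $\gamma$ depending only on $\alpha_{\bm{V}}$, $C_{\bm{V}}$, $\alpha$, the spectral bounds on $K^{-1}\mu_f$, and the Stokes and Poisson inf-sup constants — in particular independent of $h$ and $\tau$.

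The main obstacle is the bookkeeping in the last step: one must verify that all cross terms generated by the perturbations $\delta_1\bm{z}_h$ and $\delta_2\tau\bm{s}_h$ can be simultaneously absorbed, which forces a specific ordering (first use coercivity to dominate the $\bm{u}_h$- and $\bm{w}_h$-diagonal terms, then pick $\delta_2$ relative to those, then $\delta_1$ relative to everything), and one must track that no hidden $M$- or $\tau$-dependence sneaks into the constants — in particular the $\frac1M\|p_h\|^2$ term must only ever be used with a nonnegative sign and never be relied upon for a lower bound, so that the estimate is uniform as $M\to\infty$. A secondary point to be careful about is that Stokes stability here is the stability of the \emph{enriched} pair $(\bm{V}_h,Q_h)=(\bm{V}_{h,1}\oplus\bm{V}_b,Q_h)$, which is what the bubble construction in~\eqref{space_u} is designed to guarantee; the theorem takes this as a hypothesis, so the proof itself only invokes it abstractly.
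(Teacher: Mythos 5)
Your overall strategy is the right one, and it is essentially the argument the paper itself relies on: the paper's ``proof'' is a one-line deferral to Theorem~2 of the cited Hu--Rodrigo--Gaspar--Zikatanov reference, where exactly this construction (a diagonal test triple, a Stokes-inf-sup perturbation of the displacement test function to recover $\|p_h\|^2$, and a further perturbation to recover $\tau^2\|\ddiv\bm{w}_h\|^2$) is carried out. Your continuity argument and your recovery of $\|p_h\|^2$ via Stokes stability of $(\bm{V}_h,Q_h)$ are correct.

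There is, however, a genuine gap in the step that recovers $\tau^2\|\ddiv\bm{w}_h\|^2$. You perturb the Darcy test function, $\bm{r}_h=\bm{w}_h+\delta_2\tau\bm{s}_h$, and claim this ``yields $+\delta_2\tau^2\|\ddiv\bm{w}_h\|^2$''. But the only terms of $B$ that see $\bm{r}_h$ are $\tau(K^{-1}\mu_f\bm{w}_h,\bm{r}_h)$ and $-\tau(p_h,\ddiv\bm{r}_h)$: the divergence of the Darcy test function is paired with $p_h$, never with $\ddiv\bm{w}_h$. Consequently the two terms your perturbation generates are exactly the two you then list as ``cross terms to absorb,'' and nothing is left over to produce the claimed positive contribution; choosing $\bm{s}_h$ against $\ddiv\bm{w}_h$ via the Poisson inf-sup accomplishes nothing here. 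The standard (and correct) device is to perturb the \emph{pressure} test function instead: since $\ddiv\bm{W}_h\subseteq Q_h$ for RT0--P0, one may take $q_h=-p_h-\delta_2\tau\,\ddiv\bm{w}_h$, so that $-\tau(\ddiv\bm{w}_h,q_h)=\tau(\ddiv\bm{w}_h,p_h)+\delta_2\tau^2\|\ddiv\bm{w}_h\|^2$; the first piece cancels against $-\tau(p_h,\ddiv\bm{w}_h)$ from the Darcy equation, and the new cross terms $\tfrac{\delta_2\tau}{M}(p_h,\ddiv\bm{w}_h)$ and $\delta_2\alpha\tau(\ddiv\bm{u}_h,\ddiv\bm{w}_h)$ are absorbed by Young's inequality against $\|p_h\|^2$ and $\|\bm{u}_h\|_1^2$ (using that $1/M$ is bounded). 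With this replacement, your ordering of $\delta_1,\delta_2$ and the bound $\|(\bm{v}_h,\bm{r}_h,q_h)\|_\tau\lesssim\|(\bm{u}_h,\bm{w}_h,p_h)\|_\tau$ go through. A minor additional point: continuity of the $-\tfrac1M(p_h,q_h)$ term requires $1/M$ bounded above, not merely $\tfrac1M\|p_h\|^2\ge 0$, which is the coercivity-side fact.
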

\begin{proof}
  Using the conditions in Definition~\ref{d:stokes-biot-stable}, the
  two items in the statement of this theorem follow from arguments
  identical to the ones given in the proof of Theorem~2
  in~\cite{2017HuX_RodrigoC_GasparF_ZikatanovL-aa}.
\end{proof}
Note that if we replace $a(\cdot,\cdot)$ with any spectrally
equivalent bilinear form on $\bm{V}_h\times \bm{V}_h$, the same
stability result holds true. In the next section, we introduce such
a spectrally equivalent bilinear form which allows for: (1) Efficient
elimination of the degrees of freedom corresponding to the bubble
functions via static condensation; and (2) Derivation of optimal error
estimates for the fully discrete problem, following the analysis
in~\cite{2017HuX_RodrigoC_GasparF_ZikatanovL-aa}.  

\section{Local perturbation of the bilinear form and elimination of bubbles}
\label{sec:no_bubbles}

 
In this section, we show how the unknowns corresponding to
degrees of freedom in $\bm{V}_b$ can be eliminated. A straightforward
elimination of the edge/face bubbles is not local, and, in general,
leads to prohibitively large number of non-zeroes in the resulting
linear system.  To resolve this, we introduce a
consistent perturbation of $a(\cdot,\cdot)$, which has a diagonal matrix
representation. It is then easy to eliminate the unknowns
corresponding to the bubble functions in $\bm V_b$ with no fill-in.
This leads to a stable P1-RT0-P0 discretization for the Biot's model
and, consequently, to a stable P1-P0 discretization for the Stokes' equation. 

First, consider a natural decomposition of
$\bm{u}\in {\bm V}_h$:
\begin{equation}\label{natural-decomposition}
\bm{u} = 
\bm{u}_{l} +  \bm{u}_{b} = \underbrace{\Pi_1 \bm{u}}_{\bm{u}_l} + \underbrace{\sum_{e\in \mathcal E^{o,t}} u_e\bm{\Phi}_e}_{\bm{u}_b},
\end{equation}
and the local bilinear forms for $T\in \mathcal{T}_h$,
$\bm{u}\in \bm{V}_h$, and $\bm{v}\in \bm{V}_h$:
\begin{equation}\label{bilinearT}
a_T(\bm{u},\bm{v}) = 
2\mu\int_{T}\varepsilon(\bm{u}):\varepsilon(\bm{v}) +
\lambda\int_{T} \ddiv\bm{u}\ddiv\bm{v}.
\end{equation}
For the restriction of $a(\cdot,\cdot)$ onto the space spanned by bubble
functions $\bm{V}_b$, we have
\[
a_b(\bm u_b, \bm v_b) := a(\bm u_b, \bm v_b)  = \sum_{T\in
  \mathcal{T}_h} a_{b,T}(\bm u_b, \bm v_b) = \sum_{T\in \mathcal{T}_h}
\sum_{e,e' \in \partial T}u_e v_{e'} a_T(\bm{\Phi}_{e'}, \bm{\Phi}_{e}). 
\]
On each element, $T\in\mathcal{T}_h$, then introduce 
\begin{equation}\label{e:db}
d_{b,T}(\bm{u},\bm{v}) = (d+1)\sum_{e\in \partial T} u_ev_ea_T(\bm{\Phi}_e,\bm{\Phi}_e), \quad 
d_{b}(\bm{u},\bm{v}) = \sum_{T\in \mathcal{T}_h} d_{b,T}(\bm{u},\bm{v}).
\end{equation}
Replacing $a_b(\cdot,\cdot)$ with $d_b(\cdot,\cdot)$ gives a
perturbation, $a^{D}(\cdot,\cdot)$, of $a(\cdot,\cdot)$:
\begin{equation}\label{ad-form}
a^{D}(\bm u, \bm v) := d_{b}(\bm u_b, \bm v_b) + 
a(\bm u_b, \bm v_l) + a(\bm u_l, \bm v_b) + a(\bm u_l, \bm v_l)
\end{equation}

\subsection{A spectral equivalence result}
To prove that the form $a^D(\cdot,\cdot)$ and
$a(\cdot,\cdot)$ are spectrally equivalent, we need several auxiliary
results.  First, recall the
definition of the rigid body motions (modes), $\mathfrak{R}$ on
$\mathbb{R}^d$:
\[
\mathfrak{R} = \left\{ {\bm v} = {\bm a} + \mathfrak{b}{\bm x}\;\big|\;
{\bm a}\in\mathbb{R}^d,\quad 
\mathfrak{b}\in \mathfrak{so}(d)\right\},
\] 
where $\mathfrak{so}(d)$ is the algebra of skew-symmetric
$(d\times d)$ matrices.  The dimension of $\mathfrak{R}$
is $\frac12d(d+1)$ and its elements are component-wise linear
vector-valued functions. 

Next, recall the classical Korn
inequality~\cite{1908KornA-aa,1909KornA-aa} for
$\bm u\in {\bm H}^1(Y)$ for a domain $Y \subset \mathbb{R}^d$,
star-shaped with respect to a ball. As shown by
Kondratiev and Oleinik in~\cite{1989KondratievV_OleinikO-aa,1989KondratievV_OleinikO-ab},
\begin{equation}\label{Korn1}
\inf_{\mathfrak{m}\in \mathfrak{so}(d)} \|\nabla {\bm u} - \mathfrak{m}\|_{L^2(Y)} \lesssim
\|\varepsilon({\bm u})\|_{L^2(Y)}, 
\end{equation}
where the constant hidden in $\lesssim$ depends on the shape
regularity of $Y$, that is, on the ratio
$\displaystyle \frac{\operatorname{diam}(Y)}{R}$.
For convenience when referencing \eqref{Korn1} later, we state the
following lemma, which gives a simpler version of the inequality
defined on simplices, where $Y=T\in\mathcal{T}_h$. 
\begin{lemma} \label{lemma_korn}
Let $\mathcal{T}_h$ be a shape-regular simplicial mesh covering $\Omega$. 
Then, the following inequality holds for any 
$T\in \mathcal T_h$ and  $\bm{u}\in {\bm H}^1(T)$:
\begin{equation}\label{e:korn-simplex}
\inf_{\mathfrak{m}\in\mathfrak{so}(n)}\|\nabla{\bm u}-\mathfrak{m}\|_{L^2(T)} 
\lesssim \|\varepsilon({\bm u})\|_{L^2(T)}, 
\end{equation}
where the constant hidden in ``$\lesssim$'' 
depends on the shape regularity constant of $\mathcal{T}_h$. 
\end{lemma}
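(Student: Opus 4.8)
The plan is to obtain \eqref{e:korn-simplex} as an immediate consequence of the Kondratiev--Oleinik form of Korn's inequality \eqref{Korn1}, specialized to $Y=T$. Two things have to be checked: that a simplex satisfies the geometric hypothesis behind \eqref{Korn1}, and that the resulting constant can be bounded purely in terms of the shape-regularity constant of $\mathcal{T}_h$.

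For the first point, every simplex $T\in\mathcal{T}_h$ is convex, hence star-shaped with respect to each of its points, and in particular with respect to the whole inscribed ball of radius $\rho_T$ centered at the incenter of $T$. Thus \eqref{Korn1} applies with $Y=T$ and with the ball radius $R$ taken equal to $\rho_T$, yielding
\[
\inf_{\mathfrak{m}\in\mathfrak{so}(d)}\|\nabla{\bm u}-\mathfrak{m}\|_{L^2(T)} \leq C\!\left(\operatorname{diam}(T)/\rho_T\right)\|\varepsilon({\bm u})\|_{L^2(T)},
\]
where $C(\cdot)$ is the constant from \eqref{Korn1}, which enters the estimate only through the shape ratio $\operatorname{diam}(Y)/R$ (Korn's inequality in this form is invariant under dilations, so the absolute size of $T$ plays no role). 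Note $n=d$ here.

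For the second point, $\operatorname{diam}(T)=h_T$, so shape regularity of $\mathcal{T}_h$ gives $\operatorname{diam}(T)/\rho_T=h_T/\rho_T\le c$ with $c$ depending only on the mesh. Since the Korn constant in \eqref{Korn1} may be chosen as a nondecreasing function of this ratio — equivalently, the supremum of the Korn constant over all domains star-shaped with respect to a ball with $\operatorname{diam}(Y)/R\le c$ is finite — we get $C(h_T/\rho_T)\le C(c)=:C_K$, a constant depending only on the shape-regularity constant. Substituting this bound gives \eqref{e:korn-simplex}. I expect this monotone/uniform dependence of the Korn constant on $\operatorname{diam}(Y)/R$ to be the only point needing a little care; it is implicit in \eqref{Korn1} and its cited references.

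As an alternative (and to make the last point self-contained), one can instead prove \eqref{e:korn-simplex} by scaling: establish the inequality on a fixed reference simplex $\widehat T$ via the Peetre--Tartar lemma, using the compact embedding $H^1(\widehat T)\hookrightarrow L^2(\widehat T)$ together with the fact that $\varepsilon(\bm u)=0$ on $\widehat T$ forces $\bm u\in\mathfrak{R}$, and then pull back through the affine map $T\to\widehat T$, tracking how $\nabla$, $\varepsilon$, and the $L^2$ norms transform. Shape regularity bounds the condition number of the Jacobian of this map, so the resulting constant is independent of $T$. Either route delivers the claimed estimate with a constant governed solely by the shape-regularity constant of $\mathcal{T}_h$.
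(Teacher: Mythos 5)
Your proposal is correct and matches the paper's (implicit) justification: the paper offers no separate proof of Lemma~\ref{lemma_korn}, treating it exactly as you do — as the specialization of the Kondratiev--Oleinik inequality \eqref{Korn1} to $Y=T$, with star-shapedness with respect to the inscribed ball and the bound $h_T/\rho_T\lesssim 1$ from shape regularity controlling the constant. Your extra care about the uniform dependence of the Korn constant on $\operatorname{diam}(Y)/R$, and the alternative scaling/Peetre--Tartar argument, go beyond what the paper records but are consistent with it.
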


Defining the unscaled bilinear form, $\widetilde{d}_{b,T}$,
\begin{equation}
\widetilde{d}_{b,T}(\bm{u},\bm{v}) := \sum_{e\in \partial T} u_ev_ea_{T}(\bm{\Phi}_e,\bm{\Phi}_e), 
\end{equation}
we have the following local, spectral equivalence result. 
\begin{lemma}\label{l:local-spectral-equivalence}
  For all $T\in \mathcal{T}_h$  the following inequalities hold:
\begin{equation}\label{lemma_RBM}
\eta_T\widetilde{d}_{b,T}({\bm u},{\bm u}) \le 
a_{b,T}({\bm u},{\bm u})\le 
(d+1)\widetilde{d}_{b,T}({\bm u},{\bm u}), 
\quad \mbox{for all}\quad {\bm u}\in \bm{V}_b,
\end{equation}
where the constant $\eta_T$ is independent of $h_T$ and $\rho_T$.
\end{lemma}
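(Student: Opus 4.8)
The plan is to prove the two inequalities in \eqref{lemma_RBM} separately, with the right-hand inequality being essentially trivial and the left-hand one requiring the real work. For the upper bound, write any $\bm{u}\in\bm{V}_b$ restricted to $T$ as $\bm{u}|_T=\sum_{e\in\partial T}u_e\bm{\Phi}_e$, so that $a_{b,T}(\bm{u},\bm{u})=\sum_{e,e'\in\partial T}u_e u_{e'}a_T(\bm{\Phi}_{e'},\bm{\Phi}_e)$ is a quadratic form in the $(d+1)$-vector $(u_e)_{e\in\partial T}$. Since each summand $u_e u_{e'}a_T(\bm{\Phi}_{e'},\bm{\Phi}_e)$ is bounded, via Cauchy--Schwarz applied to the $a_T$-inner product, by $\tfrac12\bigl(u_e^2 a_T(\bm{\Phi}_e,\bm{\Phi}_e)+u_{e'}^2 a_T(\bm{\Phi}_{e'},\bm{\Phi}_{e'})\bigr)$, summing over the $(d+1)^2$ index pairs gives $a_{b,T}(\bm{u},\bm{u})\le(d+1)\sum_{e\in\partial T}u_e^2 a_T(\bm{\Phi}_e,\bm{\Phi}_e)=(d+1)\widetilde{d}_{b,T}(\bm{u},\bm{u})$, which is exactly the claimed right-hand inequality.

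For the lower bound $\eta_T\widetilde{d}_{b,T}(\bm{u},\bm{u})\le a_{b,T}(\bm{u},\bm{u})$, the key observation is that on a fixed simplex both $a_{b,T}$ and $\widetilde{d}_{b,T}$ are quadratic forms on the same finite-dimensional space $\bm{V}_b|_T\cong\mathbb{R}^{d+1}$, so it suffices to show that $a_{b,T}$ is positive definite on this space — equivalently, that the only $\bm{u}_b=\sum_e u_e\bm{\Phi}_e$ with $a_T(\bm{u}_b,\bm{u}_b)=0$ is $\bm{u}_b=\bm{0}$. Now $a_T(\bm{u}_b,\bm{u}_b)=0$ forces $\varepsilon(\bm{u}_b)\equiv0$ on $T$, hence $\bm{u}_b\in\mathfrak{R}$ is a rigid body motion on $T$, i.e. component-wise linear. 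But each $\bm{\Phi}_e=\varphi_e\bm{n}_e$ has $\varphi_e$ a product of $d$ distinct barycentric coordinates — a polynomial that is genuinely of degree $d\ge2$ and vanishes on every face of $T$ except $e$; one checks that the span $\{\bm{\Phi}_e\}_{e\in\partial T}$ intersects the space of linear (let alone rigid-body) vector fields only in $\{\bm{0}\}$, because a nontrivial linear combination $\sum_e u_e\bm{\Phi}_e$ evaluated on the interior of face $e_0$ reduces to $u_{e_0}\varphi_{e_0}\bm{n}_{e_0}$, which is a nonlinear scalar multiple of a fixed vector unless $u_{e_0}=0$. Hence $a_{b,T}$ is positive definite on the $(d+1)$-dimensional space $\bm{V}_b|_T$, and since $\widetilde{d}_{b,T}$ is also a positive-definite form there (being $\sum_e u_e^2 a_T(\bm{\Phi}_e,\bm{\Phi}_e)$ with each coefficient positive by the same reasoning applied to a single bubble), the two forms are equivalent with some constant $\eta_T>0$.

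The remaining — and genuinely essential — point is that $\eta_T$ can be taken independent of $h_T$ and $\rho_T$, depending only on the shape-regularity constant; this is the main obstacle and is where Lemma~\ref{lemma_korn} enters. The strategy is a scaling argument: pull $T$ back to a reference simplex $\widehat{T}$ via the affine map $F_T$, under which barycentric coordinates are preserved so the bubble functions transform cleanly, and track how $a_T$ scales. The terms $\mu\int_T\varepsilon(\bm{u}):\varepsilon(\bm{v})$ and $\lambda\int_T\ddiv\bm{u}\,\ddiv\bm{v}$ scale differently under the Piola-type/affine pullback (with factors involving $|\det F_T|$ and powers of $\|F_T'\|$, $\|F_T'^{-1}\|$), so one cannot directly reduce to a single reference estimate; instead, one treats the $\mu$-part and the $\lambda$-part separately. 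For the $\mu$-part, the generalized eigenvalue problem $a_{b,T}^\mu(\bm{u},\bm{u})=\eta\,\widetilde{d}_{b,T}^\mu(\bm{u},\bm{u})$ transforms to a reference problem whose constants depend only on $\|F_T'\|\,\|F_T'^{-1}\|\lesssim h_T/\rho_T$, which is bounded by shape regularity; the Korn inequality of Lemma~\ref{lemma_korn} is what guarantees that even on the reference element the bubble-strain form controls the bubble-gradient form with a shape-regular constant, closing the argument uniformly. For the $\lambda$-part one argues analogously with $\ddiv$ in place of $\varepsilon$, noting $\ddiv\bm{\Phi}_e$ is a nonzero polynomial so the diagonal form is again uniformly positive. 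Combining the two parts and taking $\eta_T$ to be the minimum of the two resulting constants yields the stated $h_T,\rho_T$-independent bound. I expect the bookkeeping in the scaling of the mixed $\mu$/$\lambda$ form to be the fiddly part, but no single step is deep once the positive-definiteness and the reference-element reduction are in place.
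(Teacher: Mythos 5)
Your upper bound is correct and is essentially the paper's argument (Cauchy--Schwarz applied twice to the quadratic form, producing the factor $d+1$), and your positive-definiteness observation --- that no nontrivial combination of face bubbles can be a rigid body motion --- does yield the existence of \emph{some} $\eta_T>0$ for each fixed $T$. The genuine gap is in the uniformity argument, specifically in the proposed splitting of $a_T(\cdot,\cdot)$ into its $\mu$-part and its $\lambda$-part with a separate spectral lower bound for each. The $\lambda$-part bound fails outright: the form $\lambda\int_T(\ddiv\bm{u})^2$ has a nontrivial kernel on the local bubble space, so no $\eta^{\lambda}>0$ with $\eta^{\lambda}\sum_e u_e^2\lambda\|\ddiv\bm{\Phi}_e\|^2_{L^2(T)}\le\lambda\|\ddiv\bm{u}\|^2_{L^2(T)}$ can exist. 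Concretely, on the triangle with vertices $(0,0)$, $(1,0)$, $(0,1)$, labeling $e_i$ the edge opposite vertex $i$ and taking outward normals, one computes
\begin{equation*}
\ddiv(\varphi_{e_1}\bm{n}_{e_1})=\tfrac{1}{\sqrt2}(x+y),\qquad
\ddiv(\varphi_{e_2}\bm{n}_{e_2})=y,\qquad
\ddiv(\varphi_{e_3}\bm{n}_{e_3})=x,
\end{equation*}
so that $\ddiv\bigl(\sqrt2\,\bm{\Phi}_{e_1}-\bm{\Phi}_{e_2}-\bm{\Phi}_{e_3}\bigr)=0$ while each diagonal term $\|\ddiv\bm{\Phi}_{e_i}\|^2_{L^2(T)}$ is strictly positive. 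Hence ``taking the minimum of the two constants'' cannot be carried out: the $\lambda$-contribution to $\widetilde{d}_{b,T}$ must be absorbed by the $\mu$-part of $a_{b,T}$, not by its $\lambda$-part.

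This is exactly what the paper's proof does, by routing both forms through the intermediate quantity $h_T^{-2}\|\bm{u}\|^2_{L^2(T)}$: an inverse inequality bounds all of $\widetilde{d}_{b,T}(\bm{u},\bm{u})$ --- including the $\lambda\|\ddiv\bm{\Phi}_e\|^2$ terms --- by $h_T^{-2}\sum_e u_e^2\|\bm{\Phi}_e\|^2_{L^2(T)}$, which is $\le h_T^{-2}\|\bm{u}\|^2_{L^2(T)}$ because the Gram matrix $(\bm{n}_e\cdot\bm{n}_{e'})$ is positive semidefinite; then, since $\Pi_1\bm{u}=\bm{0}$ for $\bm{u}\in\bm{V}_b$ and $\Pi_1\mathfrak{r}=\mathfrak{r}$ for rigid modes, an interpolation estimate applied to $\bm{u}-\mathfrak{r}$ followed by Korn's inequality (Lemma~\ref{lemma_korn}) gives $h_T^{-2}\|\bm{u}\|^2_{L^2(T)}\lesssim\|\varepsilon(\bm{u})\|^2_{L^2(T)}\le(2\mu)^{-1}a_{b,T}(\bm{u},\bm{u})$. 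This route also sidesteps a second soft spot in your sketch: the strain does not commute with the affine pullback (and the physical normals $\bm{n}_e$ are not images of the reference normals), so the generalized eigenvalue problem does not ``transform to a reference problem'' with constants controlled simply by $\|F_T'\|\,\|(F_T')^{-1}\|$; making that precise would require a compactness argument over the shape-regular family, whereas the Korn-plus-interpolation chain gives the uniform constant directly on the physical element. Your instinct that Korn's inequality is the essential ingredient is right, but it must be applied to the full strain form on $T$, not separately to the $\mu$- and $\lambda$-parts after a change of variables.
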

\begin{proof}
  Set $a_{ee'} = a_{b,T}(\bm{\Phi}_{e'},\bm{\Phi}_e)$ and note that
  $a_{ee} = \widetilde{d}_{b,T}(\bm{\Phi}_e,\bm{\Phi}_e)$ for all
  $e,\;e'\in \partial{T}$.  
  The upper bound follows immediately by several (two) applications of
  the Cauchy-Schwarz inequality:
\begin{eqnarray*}
a_{b,T}({\bm u}, {\bm u}) &=& \sum_{e,e'\in \partial T} a_{ee'}u_e u_{e'}
\leq  \sum_{e,e'\in \partial T} \sqrt{a_{ee}a_{e'e'}}|u_eu_{e'}| 
 =\left( \sum_{e\in \partial T} \sqrt{a_{ee}} |u_e|   \right)^2\\
& \leq & (d+1) \sum_{e\in \partial T}a_{ee} u_e^2 = (d+1)\widetilde{d}_{b,T}({\bm u},{\bm u}). 
\end{eqnarray*}
We prove the lower bound by establishing the following inequalities
for $\bm{u}\in \bm{V}_b$:
\begin{equation}\label{e:two-inequalities}
h_T^{-2} \|{\bm u}\|^2_{L^2(T)} \lesssim a_{b,T}({\bm u},{\bm u}), 
\quad\mbox{and}\quad
\widetilde{d}_{b,T}({\bm u},{\bm u}) \lesssim h_T^{-2} \|{\bm u}\|^2_{L^2(T)}.
\end{equation}
By definition 
for all $\bm{u}\in \bm{V}_b$ and all rigid body modes ${\mathfrak r}\in \mathfrak{R}$, we have
that $\Pi_1{\bm u}=\bm{0}$ and $\Pi_1 {\mathfrak r} = {\mathfrak r}$. The classical
interpolation estimates found in~\cite[Chapter 3]{1978CiarletP-aa} give
\begin{eqnarray*}
\| {\bm u}\|^2_{L^2(T)} & = & 
\| {\bm u} - {\mathfrak r} - \Pi_1({\bm u}-{\mathfrak r}) \|^2_{L^2(T)} 
\lesssim h_T^2 \|\nabla({\bm u} - {\mathfrak r}) \|^2_{L^2(T)}.
\end{eqnarray*}
Taking the infimum over all $\mathfrak{r}\in \mathfrak{R}$ and applying Korn's inequality 
(Lemma~\ref{lemma_korn}) then yields
\begin{eqnarray*}
h_T^{-2} \| {\bm u}\|^2_{L^2(T)} & \lesssim&  
 \inf_{{\mathfrak r}\in \mathfrak{R}}\|\nabla({\bm u} - {\mathfrak r})\|^2_{L^2(T)}
 =  \inf_{\mathfrak{m}\in \mathfrak{so}(d)}\|\nabla{\bm u} - \mathfrak{m}\|^2_{L^2(T)} \lesssim 
\|\varepsilon({\bm u})\|^2_{L^2(T)}.
\end{eqnarray*}
This shows the first inequality in~\eqref{e:two-inequalities}, and  
to prove the second inequality, we note that from the definition of 
$\widetilde{d}_{b,T}(\cdot,\cdot)$ and the inverse inequality, we have that 
\[
\widetilde{d}_{b,T}(\bm u,\bm u)
\lesssim \sum_{e\in \partial T}  u_e^2 \left[\|\nabla \bm{\Phi}_e\|_{L^2(T)}^2 + \lambda \|\ddiv \bm{\Phi}_e\|_{L^2(T)}^2\right] \lesssim h_T^{-2} \sum_{e\in \partial T}  u_e^2 \|\bm{\Phi}_e\|_{L^2(T)}^2.
\]
Recalling the 
definition of $\Phi_e$ in~\eqref{e:def-phie}
and the formula for integrating powers of the barycentric coordinates, gives
\begin{equation}\label{int-on-T}
\bm{\Phi}_e = \varphi_e \bm{n}_e, \quad 
\int_T \lambda_1^{\beta_1}\ldots\lambda_{d+1}^{\beta_{d+1}}\;dx =
|T|\frac{\beta_{1}!\ldots\beta_{d+1}! d! }{(\beta_{1}+\ldots+\beta_{d+1}+d)!}.
\end{equation}
It follows that \(\|\bm{\Phi}_e\|^2_{L^2(T)} = c_d|T|\)
and
\(\int_T \bm{\Phi}_e\bm{\Phi}_{e'}=  \frac{1}{2} c_d|T|(\delta_{ee'}+\bm{n}_e\cdot
\bm{n}_{e'})\),
with $c_d=\frac{d! \ 2^d}{(3d)!}$. As the Gram matrix
$(\bm{n}_e\cdot\bm{n}_{e'})_{e,e'\in\partial T}$ is positive semi-definite,
\begin{eqnarray*}
\sum_{e\in \partial T}  u_e^2 \|\bm{\Phi}_e\|_{L^2(T)}^2  & = & 
c_d |T|\sum_{e\in \partial T}  u_e^2 
  \le 
c_d|T|\left[
\sum_{e\in \partial T}  u_e^2 + \sum_{e,e'\in \partial T}  u_eu_{e'} (\bm{n}_e\cdot\bm{n}_{e'})\right] \\
& = &   
\left\|\sum_{e\in \partial T}  u_e\bm{\Phi}_e\right\|_{L^2(T)}^2 = \|\bm u\|_{L^2(T)}^2.
\end{eqnarray*}
Multiplying by $h_T^{-2}$ on both sides of this inequality furnishes
the proof of~\eqref{e:two-inequalities}, completing the proof of the Lemma.  
\end{proof}
Next, we show the spectral equivalence for the bilinear forms
$a(\cdot,\cdot)$ and $a^{D}(\cdot,\cdot)$.
\begin{lemma}\label{l:global-spectral-equivalence}
The following inequalities hold:
\[
a(\bm u, \bm u) \le a^{D}(\bm u, \bm u) \le \eta a(\bm u, \bm u), \quad \text{for all} \quad \bm{u} \in \bm{V}_h,
\]
where $\eta$ depends on the shape regularity of the mesh. 
\end{lemma}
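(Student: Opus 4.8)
The plan is to reduce both inequalities to a single statement about the bubble--bubble block, and then to a stability estimate for the splitting~\eqref{natural-decomposition}. Writing $\bm u = \bm u_l + \bm u_b$ with $\bm u_l = \Pi_1\bm u$ and $\bm u_b=\sum_{e\in\mathcal E^{o,t}}u_e\bm{\Phi}_e$, the bilinearity of $a(\cdot,\cdot)$ and the definition~\eqref{ad-form} show that $a^{D}$ and $a$ agree on the mixed term $2a(\bm u_l,\bm u_b)$ and on the linear term $a(\bm u_l,\bm u_l)$, so that
\[
a^{D}(\bm u,\bm u) - a(\bm u,\bm u) = d_b(\bm u_b,\bm u_b) - a_b(\bm u_b,\bm u_b).
\]
Since $d_{b,T}=(d+1)\widetilde d_{b,T}$ by definition, the right inequality in Lemma~\ref{l:local-spectral-equivalence} reads $a_{b,T}(\bm u_b,\bm u_b)\le d_{b,T}(\bm u_b,\bm u_b)$ on every $T$; summing over $\mathcal T_h$ gives $a_b(\bm u_b,\bm u_b)\le d_b(\bm u_b,\bm u_b)$, which is exactly the lower bound $a(\bm u,\bm u)\le a^{D}(\bm u,\bm u)$.

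For the upper bound, the left inequality in Lemma~\ref{l:local-spectral-equivalence}, again using $d_{b,T}=(d+1)\widetilde d_{b,T}$, gives $d_{b,T}(\bm u_b,\bm u_b)\le (d+1)\eta_T^{-1}a_{b,T}(\bm u_b,\bm u_b)$, hence
\[
a^{D}(\bm u,\bm u) = a(\bm u,\bm u) + \big(d_b(\bm u_b,\bm u_b)-a_b(\bm u_b,\bm u_b)\big) \le a(\bm u,\bm u) + (\kappa-1)\,a(\bm u_b,\bm u_b),\qquad \kappa:=\max_{T\in\mathcal T_h}\tfrac{d+1}{\eta_T},
\]
where $\kappa$ is controlled by the shape regularity. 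So the whole lemma comes down to showing that the splitting~\eqref{natural-decomposition} is stable in the elasticity energy, i.e. $a(\bm u_b,\bm u_b)\le C\,a(\bm u,\bm u)$ for all $\bm u\in\bm V_h$. Equivalently, one has to prove a strengthened Cauchy--Schwarz inequality $|a(\bm u_l,\bm u_b)|\le\gamma\,a(\bm u_l,\bm u_l)^{1/2}a(\bm u_b,\bm u_b)^{1/2}$ with $\gamma<1$ between $\bm V_{h,1}$ and $\bm V_b$, which follows because $\bm V_{h,1}\cap\bm V_b=\{\zerb\}$; then $a(\bm u,\bm u)\ge(1-\gamma)\big(a(\bm u_l,\bm u_l)+a(\bm u_b,\bm u_b)\big)$ and one is done.

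To prove $a(\bm u_b,\bm u_b)\lesssim a(\bm u,\bm u)$ I would argue element by element on $a_T(\bm u_b,\bm u_b)=2\mu\|\varepsilon(\bm u_b)\|_{L^2(T)}^2+\lambda\|\ddiv\bm u_b\|_{L^2(T)}^2$. The strain part is treated exactly as in the proof of Lemma~\ref{l:local-spectral-equivalence}: since $\Pi_1$ reproduces rigid body modes, $\bm u_b=(\bm u-\mathfrak r)-\Pi_1(\bm u-\mathfrak r)$ for every $\mathfrak r\in\mathfrak R$, so the classical interpolation estimate, an inverse inequality, and Korn's inequality (Lemma~\ref{lemma_korn}) yield $\|\bm u_b\|_{L^2(T)}\lesssim h_T\|\varepsilon(\bm u)\|_{L^2(T)}$ and then $\|\varepsilon(\bm u_b)\|_{L^2(T)}\lesssim h_T^{-1}\|\bm u_b\|_{L^2(T)}\lesssim\|\varepsilon(\bm u)\|_{L^2(T)}$; summing gives $2\mu\|\varepsilon(\bm u_b)\|_{L^2(\Omega)}^2\lesssim 2\mu\|\varepsilon(\bm u)\|_{L^2(\Omega)}^2\le a(\bm u,\bm u)$.

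The main obstacle is the divergence term $\lambda\|\ddiv\bm u_b\|_{L^2(T)}^2$. The naive pointwise bound $\|\ddiv\bm u_b\|_{L^2(T)}\le\sqrt d\,\|\varepsilon(\bm u_b)\|_{L^2(T)}\lesssim\|\varepsilon(\bm u)\|_{L^2(T)}$ only gives $\lambda\|\ddiv\bm u_b\|_{L^2(T)}^2\lesssim\lambda\|\varepsilon(\bm u)\|_{L^2(T)}^2$, which is \emph{not} dominated by $a_T(\bm u,\bm u)$ when $\lambda/\mu$ is large (a field with $\ddiv\bm u|_T=\zerb$ but $\bm u_b|_T\neq\zerb$ shows no local estimate can do better). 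The remedy is to use that $\ddiv\bm u_b=\ddiv\bm u-\ddiv(\Pi_1\bm u)$ with $\ddiv(\Pi_1\bm u)$ a single constant tied to the vertex values of $\bm u$, together with the fact that each face bubble $\bm\Phi_e$ is supported on \emph{both} elements sharing $e$, so the divergence contributions across a face can be rearranged and absorbed into $\|\ddiv\bm u\|_{L^2(\Omega)}^2$ up to terms already bounded by the strain estimate; the geometric constants in this rearrangement — for instance the comparability of $\|\ddiv\bm\Phi_e\|_{L^2(T)}$ on the two elements adjacent to $e$, which follows from the comparability of the volumes of face-neighbours in a shape-regular mesh — depend only on the shape regularity. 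A scaling argument reducing to a reference element (with the face normals ranging over a compact admissible set) combined with the finite-dimensionality of the local spaces makes these constants uniform. Putting the two pieces together gives $a(\bm u_b,\bm u_b)\lesssim a(\bm u,\bm u)$, hence $a^{D}(\bm u,\bm u)\le\eta\,a(\bm u,\bm u)$, and with the lower bound this completes the proof.
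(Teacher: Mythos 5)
Your lower bound and your reduction of the upper bound to the single estimate $a(\bm u_b,\bm u_b)\lesssim a(\bm u,\bm u)$ are exactly the paper's argument: the mixed and purely linear terms cancel in $a^D-a$, the bound $a_{b,T}\le d_{b,T}$ from Lemma~\ref{l:local-spectral-equivalence} gives $a\le a^D$, and the local lower bound gives $d_{b,T}-a_{b,T}\le(\tfrac{d+1}{\eta_T}-1)\,a_{b,T}$ on each element. Your treatment of the strain part --- $\|\varepsilon(\bm u_b)\|_{L^2(T)}\lesssim h_T^{-1}\|\bm u_b\|_{L^2(T)}=h_T^{-1}\|(\bm u-\mathfrak r)-\Pi_1(\bm u-\mathfrak r)\|_{L^2(T)}\lesssim\|\varepsilon(\bm u)\|_{L^2(T)}$ via the interpolation estimate, an inverse inequality, and Korn's inequality --- is precisely the chain used in the paper's proof. (The aside that a strengthened Cauchy--Schwarz constant $\gamma<1$ ``follows because $\bm V_{h,1}\cap\bm V_b=\{\bm 0\}$'' is not a valid justification: trivial intersection gives $\gamma<1$ for one fixed pair of finite-dimensional spaces, not uniformly in $h$; but you do not actually rely on this remark.)

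The divergence term is where you depart from the paper, and where your write-up stops being a proof. The paper does not attempt a $\lambda$-uniform bound: it simply writes $a_T(\bm u_b,\bm u_b)\lesssim\|\nabla\bm u_b\|_{L^2(T)}^2$, absorbing the factor $2\mu+\lambda d$ into the hidden constant, so its $\eta$ in fact behaves like $1+\lambda/\mu$ times a shape-regularity constant (the statement of the lemma suppresses the dependence on the material parameters). Your observation that no purely local estimate can remove this $\lambda$-dependence is correct, but the proposed remedy --- rearranging the $\ddiv\bm\Phi_e$ contributions over the two elements sharing a face and ``absorbing'' them into $\|\ddiv\bm u\|_{L^2(\Omega)}^2$ --- is not an argument: no identity or inequality is exhibited, and it is unclear what cancellation you expect, since $\ddiv\bm u_b|_T=\ddiv\bm u|_T-\ddiv\Pi_1\bm u|_T$ involves the interpolant of $\bm u$ on $T$ and does not telescope across faces. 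Either accept the $\lambda$-dependent constant, in which case the pointwise bound $\|\ddiv\bm u_b\|_{L^2(T)}\le\sqrt d\,\|\varepsilon(\bm u_b)\|_{L^2(T)}$ that you already wrote down closes the proof and you have reproduced the paper's argument, or supply an actual proof of the $\lambda$-uniform claim; as written, this last step is a genuine gap.
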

\begin{proof}
Let $\bm{u}\in \bm{V}_h$, $\bm u = \bm u_l + \bm u_b$. 
From the definition of $d_{b}(\cdot,\cdot)$ in~\eqref{e:db}, 
$a_{b,T}(\bm u_b,\bm u_b) \le d_{b,T}(\bm u_b,\bm u_b)$, and the lower bound follows immediately:
\[
a(\bm{u},\bm{u}) -a^D(\bm{u},\bm{u}) =  
a_b(\bm{u}_b,\bm{u}_b) -d_b(\bm{u}_b,\bm{u}_b) = 
\sum_{T\in \mathcal{T}_h}[a_{b,T}(\bm{u}_b,\bm{u}_b) - d_{b,T}(\bm{u}_b,\bm{u}_b)] \le 0. 
\]
To prove the upper bound, we use the following local estimate, which
is established using an inverse inequality, a standard interpolation estimate, and $\Pi_1 \mathfrak{r} = \mathfrak{r}$ for all rigid body modes $\mathfrak{r} \in \mathfrak{R}$,
\begin{eqnarray*}
a_{T}({\bm u}_b,{\bm u}_b) & \lesssim & \|\nabla\bm u_b\|_{L^2(T)}^2 \lesssim h_T^{-2}\|\bm u_b\|_{L^2(T)}^2 = 
h_T^{-2}\|\bm u-\Pi_1 \bm u\|_{L^2(T)}^2 \\
& = & h_T^{-2}\|\bm (u - \mathfrak{r})-\Pi_1 (\bm u - \mathfrak{r})\|_{L^2(T)}^2  \lesssim   \|\nabla (\bm u  -\mathfrak{r})\|_{L^2(T)}^2.
\end{eqnarray*}
Taking the infimum over all $\mathfrak{r} \in \mathfrak{R}$ and applying the Korn's inequality (Lemma~\ref{lemma_korn}) then yields
\begin{equation*}
a_{T}({\bm u}_b,{\bm u}_b) \lesssim \inf_{\mathfrak{r} \in \mathfrak{R}} \| \nabla (\bm u - \mathfrak{r}) \|^2_{L^2(T)}  = \inf_{\mathfrak{m} \in \mathfrak{so}(d)} \| \nabla \bm u - \mathfrak{m} \|^2_{L^2(T)} \lesssim  \|\varepsilon({\bm u})\|^2_{L^2(T)}.
\end{equation*}

This inequality, combined with the  definition of $a^{D}(\cdot,\cdot)$, and 
the lower bound in Lemma~\ref{l:local-spectral-equivalence} gives,
\begin{eqnarray*}
  a^D(\bm{u},\bm{u}) & =  & a(\bm u,\bm u) + 
  \sum_{T\in \mathcal{T}_h} d_{b,T} (\bm u_b,\bm u_b) - a_{T} (\bm u_b,\bm u_b) \\
& \le & a(\bm u,\bm u)+ \sum_{T\in \mathcal{T}_h} \left(\frac{d+1}{\eta_T}-1\right)a_{T} (\bm u_b,\bm u_b) \\
& \lesssim & a(\bm u,\bm u) +  \sum_{T\in \mathcal{T}_h} \left(\frac{d+1}{\eta_T}-1\right)  \|\varepsilon(\bm u)\|_{L^2(T)}^2    \\
& \lesssim &  a(\bm u, \bm u).
\end{eqnarray*}
\end{proof}

Since we have shown that the bilinear form $a^D(\cdot,\cdot)$ can
replace $a(\cdot,\cdot)$ in Definition~\ref{d:stokes-biot-stable}, then
Theorem~\ref{t:stable1} holds when the bilinear form,
$B(\cdot, \cdot, \cdot \ ; \ \cdot, \cdot, \cdot)$, has 
$a^D(\cdot,\cdot)$ instead of $a(\cdot,\cdot)$. Thus, the variational problem, 
\begin{eqnarray}
  && a^D(\bm{u}_h^m,\bm{v}_h) - (\alpha p_h^m,\ddiv \bm{v}_h) = (\rho\bm g,\bm{v}_h),
     \quad \forall \  \bm{v}_h\in {\bm V}_h, \label{discrete1s_D}\\
  && (K^{-1}\mu_f\bm{w}_h^m,\bm{r}_h) - (p_h^m,\ddiv \bm{r}_h) = (\rho_f {\bm g}, \bm{r}_h), \quad \forall \ \bm{r}_h\in \bm W_h,\label{discrete2s_D}\\
  &&  \left(\frac{1}{M} \bar{\partial}_t p^m_h ,q_h \right)+\left(\alpha\ddiv \bar{\partial}_t \bm{u}_h^m,q_h\right) + (\ddiv \bm{w}_h^m,q_h)   = (f, q_h), \quad \forall \ q_h \in Q_h,\label{discrete3s_D}
\end{eqnarray}
has a unique solution and defines an invertible operator with inverse bounded independent of the mesh size $h$.  
This observation plays a crucial role in the error estimates in the
next subsection. 

For later comparison, we define following block form of the
discrete problem:
\begin{equation}\label{diagonal_block_form}
\mathcal{ A}^D \left(
\begin{array}{c} 
{\bm U}_b \\ 
{\bm U}_l \\ 
{\bm W} \\ 
{\bm P} 
\end{array}
\right) = 
{\bm b}, \ \ \hbox{with} \ \ 
\mathcal{ A}^D = \left( 
\begin{array}{cccc} 
D_{bb} & A_{bl} & 0 & G_b \\ 
A_{bl}^T & A_{ll} & 0 & G_l \\ 
0 & 0 & \tau M_w & \tau G \\
G_b^T & G_l^T & \tau G^T & -M_p
\end{array} 
\right),
\end{equation} 
where everything is defined as before and $D_{bb}$ corresponds to $a^D(\bm{u}_h^b,\bm{v}_h^b)$.
\subsection{Error estimates for the fully discrete problem}\label{sec:error}
To derive the error analysis of the fully discrete scheme, following the standard error
analysis of time-dependent problems in
Thom\'{e}e~\cite{2006ThomeeV-aa}, we first define the following
elliptic projections $\bar{\bm{u}}_h \in {\bm{V}}_h$,
$\bar{\bm{w}}_h \in \bm{W}_h$, and $\bar{p}_h \in Q_h$ for $t>0$ as
usual,
\begin{align}
& a^D(\bar{\bm{u}}_h, \bm{v}_h) - (\alpha\bar{p}_h, \ddiv \bm{v}_h) =
a(\bm{u}, \bm{v}_h) - (\alpha p, \ddiv \bm{v}_h),
\quad \forall \bm{v}_h \in {\bm{V}}_h, \label{eqn:elliptic-proj-u}\\
& (K^{-1}\mu_f \bar{\bm{w}}_h, \bm{r}_h ) - (\bar{p}_h, \ddiv \bm{r}_h) =
(K^{-1}\mu_f \bm{w}, \bm{r}_h ) - (p, \ddiv \bm{r}_h),
\quad \forall \bm{r}_h \in \bm{W}_h, \label{eqn:elliptic-proj-w}\\
& (\ddiv \bar{\bm{w}}_h, q_h) =  (\ddiv \bm{w}, q_h), \quad \forall q_h \in Q_h, \label{eqn:elliptic-proj-p}
\end{align}
Note that the above elliptic projections are decoupled;
$\bar{\bm{w}}_h$ and $\bar{p}_h$ are defined by
\eqref{eqn:elliptic-proj-w} and \eqref{eqn:elliptic-proj-p}, which is
a mixed formulation of the Poisson equation.  Therefore, the
existence and uniqueness of $\bar{\bm{w}}_h$ and $\bar{p}_h$ follow
directly from standard results.  After $\bar{p}_h$
is defined, $\bar{\bm{u}}_h$ is then determined by solving
\eqref{eqn:elliptic-proj-u}, which is a linear elasticity problem, and again
the existence and uniqueness of $\bar{\bm{u}}_h$ follow from
standard results.   Now, we split
the errors as follows,
\begin{align*}
& \bm{u}(t_n) - \bm{u}_h^n = \left( \bm{u}(t_n) - \bar{\bm{u}}_h(t_n) \right) - \left(  \bm{u}_h^n - \bar{\bm{u}}_h(t_n)  \right) =: \rho_{\bm{u}}^n - e_{\bm{u}}^n, \\
&\bm{w}(t_n) - \bm{w}_h^n = \left( \bm{w}(t_n) - \bar{\bm{w}}_h(t_n) \right) - \left(  \bm{w}_h^n - \bar{\bm{w}}_h(t_n)  \right) =: \rho_{\bm{w}}^n - e_{\bm{w}}^n, \\
&p(t_n) - p_h^n = \left( p(t_n) - \bar{p}_h(t_n) \right) - \left(  p_h^n - \bar{p}_h(t_n)  \right) =: \rho_{p}^n - e_{p}^n.
\end{align*}

\begin{lemma}
The following error estimates for the elliptic projections defined in \eqref{eqn:elliptic-proj-u}-\eqref{eqn:elliptic-proj-p} hold for $t>0$,
\begin{align}
& \| \rho_{\bm{u}} \|_1 \leq c h \left( \| \bm{u} \|_2 + \| p \|_1 \right), \label{ine:rho_u} \\
& \| \rho_{\bm{w}} \| \leq c h \| \bm{w} \|_1, \label{ine:rho_w} \\
& \| \rho_p \| \leq c h \left(  \| p \|_1 + \| \bm{w} \|_1 \right). \label{ine:rho_p}
\end{align}
\end{lemma}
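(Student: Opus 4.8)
The plan is to estimate each of the three projection errors in turn, treating the decoupled structure of the elliptic projections to our advantage. I would start with $\rho_{\bm{w}}$ and $\rho_p$, since equations \eqref{eqn:elliptic-proj-w}--\eqref{eqn:elliptic-proj-p} form exactly a mixed (dual-mixed) discretization of a Poisson problem with the $\mathrm{RT0}$--$\mathrm{P0}$ pair, and by the Stokes-Biot stability hypothesis (Definition~\ref{d:stokes-biot-stable}, third bullet) this pair is Poisson-stable. Hence the standard Babu\v{s}ka--Brezzi theory for mixed methods applies: the error $(\rho_{\bm{w}}, \rho_p)$ is bounded by the best approximation error in the respective norms, i.e. $\|\rho_{\bm{w}}\| + \|\rho_p\| \lesssim \inf_{\bm{r}_h}\|\bm{w}-\bm{r}_h\| + \inf_{q_h}\|p - q_h\|$, plus (for the $\bm{w}$ estimate) a $\|\ddiv(\bm{w}-\bm{r}_h)\|$ contribution coming through the constraint equation. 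Invoking the standard interpolation estimates for $\mathrm{RT0}$ (the canonical interpolant $\bm{r}_h$ has $\|\bm{w}-\bm{r}_h\| \lesssim h\|\bm{w}\|_1$ and commutes with the divergence so $\ddiv\bm{r}_h = \Pi_0\ddiv\bm{w}$, giving $\|\ddiv(\bm{w}-\bm{r}_h)\| \lesssim h\|\ddiv\bm{w}\|_1 \lesssim h\|\bm{w}\|_2$ — or one keeps it at the level of $\|\bm{w}\|_1$ using that $\ddiv\bm{w}$ equals a data term) and for $\mathrm{P0}$ ($\|p-\Pi_0 p\|\lesssim h\|p\|_1$) yields \eqref{ine:rho_w} and \eqref{ine:rho_p}; the $\|\bm{w}\|_1$ term in \eqref{ine:rho_p} enters because $\bar p_h$ is coupled to $\bar{\bm{w}}_h$ through \eqref{eqn:elliptic-proj-w}.

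Next I would handle $\rho_{\bm{u}}$. Subtracting the definition of the continuous elasticity form evaluated against $\bm{v}_h\in\bm{V}_h$ from \eqref{eqn:elliptic-proj-u}, one gets $a^D(\bar{\bm{u}}_h,\bm{v}_h) = a(\bm{u},\bm{v}_h) - (\alpha(p-\bar p_h),\ddiv\bm{v}_h)$, i.e. $\bar{\bm{u}}_h$ is the Galerkin solution in the $a^D$-inner product of an elasticity problem whose right-hand side is perturbed by the already-estimated pressure error $\rho_p$. By Lemma~\ref{l:global-spectral-equivalence}, $a^D$ is spectrally equivalent to $a$, hence coercive and continuous on $\bm{V}_h$ uniformly in $h$ (using also the $\bm{H}^1$-coercivity and continuity of $a$ from the first two bullets of Definition~\ref{d:stokes-biot-stable}, which rest on Korn's inequality). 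So a Céa-type argument applies: $\|\rho_{\bm{u}}\|_1 \lesssim \inf_{\bm{v}_h\in\bm{V}_h}\|\bm{u}-\bm{v}_h\|_1 + \|p - \bar p_h\|$. For the first term, take $\bm{v}_h = \Pi_1\bm{u} \in \bm{V}_{h,1}\subset\bm{V}_h$ and use the standard $\mathrm{P1}$ interpolation estimate $\|\bm{u}-\Pi_1\bm{u}\|_1 \lesssim h\|\bm{u}\|_2$. For the second term, note $\|p-\bar p_h\| \le \|\rho_p\| + \ldots$ — actually $p - \bar p_h = \rho_p$ by definition, so this is already bounded by $ch(\|p\|_1 + \|\bm{w}\|_1)$; one then absorbs $\|\bm{w}\|_1$ by observing that, since $\bm{w}$ and $p$ are linked through Darcy's law $\bm{w} = -\tfrac{1}{\mu_f}\bm{K}(\nabla p - \rho_f\bm{g})$, the norm $\|\bm{w}\|_1$ is controlled by $\|p\|_2$ plus data; but to match the stated bound \eqref{ine:rho_u} it is cleaner to simply keep the $\|p\|_1$ term and note the $\|\bm{w}\|_1$ contribution to $\rho_{\bm{u}}$ can be folded in or is dominated — I would re-examine the bookkeeping so that the final bound reads $\|\rho_{\bm{u}}\|_1 \le ch(\|\bm{u}\|_2 + \|p\|_1)$ as claimed, possibly by using a sharper mixed estimate $\|\rho_p\| \lesssim h\|p\|_1$ when the permeability-weighted terms are grouped appropriately.

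The main obstacle I anticipate is not any single estimate but the careful tracking of which continuous norms appear on the right-hand sides, and ensuring all constants are independent of the physical parameters ($\lambda$, $M$, $\bm{K}$) — in particular, that the $\lambda\|\ddiv\bm{u}\|$ term in $a$ and $a^D$ does not spoil the Korn-based coercivity used in the Céa argument for $\bar{\bm{u}}_h$. This is handled because $a(\bm{v},\bm{v}) \ge 2\mu\|\varepsilon(\bm{v})\|^2 \gtrsim \|\bm{v}\|_1^2$ already without the $\lambda$ term (the $\lambda$-term only helps coercivity), so the lower bound $\alpha_{\bm{V}}\|\bm{v}_h\|_1^2$ is $\lambda$-robust; continuity, however, is only w.r.t. the $\lambda$-weighted norm, so for the upper (best-approximation) side one must either accept a $\lambda$-dependence in the elliptic-projection error for $\bm{u}$ in a parameter-weighted norm or, as is standard here, state \eqref{ine:rho_u} in the plain $\bm{H}^1$ norm with a constant that does carry the implicit regularity assumption that $\lambda\ddiv\bm{u}$ behaves well (which is consistent with the regularity $\bm{u}\in\bm{H}^2$ being used). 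I would make this precise by invoking the interpolation estimate against $\Pi_1\bm{u}$ together with $\|\ddiv(\bm{u}-\Pi_1\bm{u})\| \lesssim h\|\bm{u}\|_2$ so the $\lambda$-term in $a(\bm{u}-\Pi_1\bm{u},\cdot)$ is also $O(h)$, and conclude. The remaining steps — assembling the three bounds — are routine.
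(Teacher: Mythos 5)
Your proposal follows essentially the same route as the paper: the estimates \eqref{ine:rho_w}--\eqref{ine:rho_p} are quoted from the standard mixed (RT0--P0) Poisson error analysis, and \eqref{ine:rho_u} is obtained by a coercivity/Galerkin-orthogonality (C\'ea-type) argument comparing $\bar{\bm u}_h$ with $\Pi_1\bm u$, where your choice of the piecewise-linear interpolant is precisely what makes the consistency term between $a^D$ and $a$ vanish (the paper records this as $a^D(\Pi_1\bm u,\bm v_h)=a(\Pi_1\bm u,\bm v_h)$), after which the pressure contribution is absorbed via $\|\rho_p\|$. Your side remarks --- that the generic infimum over all of $\bm V_h$ would need the $a^D-a$ consistency term, and that inserting \eqref{ine:rho_p} formally carries a $\|\bm w\|_1$ term into the bound for $\|\rho_{\bm u}\|_1$ --- are accurate bookkeeping observations that do not change the argument, which the paper's own proof also treats implicitly.
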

\begin{proof}
Error estimates in \eqref{ine:rho_w} and \eqref{ine:rho_p} follow
from the error analysis of the mixed formulation of Poisson
problems. 
The estimate \eqref{ine:rho_u} follows from the triangle inequality: 
\[
\| \rho_{\bm{u}} \|_1 \leq \| \bm{u} - \Pi_1\bm{u} \|_1 + 
\| \Pi_1{\bm u} - \bar{\bm{u}}_h \|_1,
\]
where $\Pi_1 \bm{u}$ is the linear interpolant of ${\bm u}$.
Using the coercivity of $a^D(\cdot, \cdot)$ and that
$a^D(\Pi_1{\bm u}, {\bm v}_h) = a(\Pi_1{\bm u}, {\bm v}_h) \ \forall
{\bm v}_h\in {\bm V}_h$, for the linear function, $\Pi_1{\bm u}$, we get, 
\begin{eqnarray*}
\alpha_{\bm V}^D \|\Pi_1{\bm u} - \bar{\bm{u}}_h \|^2_1 \leq a^D(\Pi_1{\bm u} - \bar{\bm{u}}_h, \Pi_1{\bm u} - \bar{\bm{u}}_h) = a(\Pi_1{\bm u},\Pi_1{\bm u} - \bar{\bm{u}}_h) - a^D(\bar{\bm{u}}_h, \Pi_1{\bm u} - \bar{\bm{u}}_h)
\end{eqnarray*}
Taking into account that $\bar{\bm{u}}_h$ is the solution of equation \eqref{eqn:elliptic-proj-u},
\begin{eqnarray*}
\alpha_{\bm V}^D \|\Pi_1{\bm u} - \bar{\bm{u}}_h \|^2_1 &\leq& a(\Pi_1{\bm u} - {\bm{u}},\Pi_1{\bm u} - \bar{\bm{u}}_h) + \alpha(p-\bar{p}_h,\ddiv(\Pi_1{\bm u} - \bar{\bm{u}}_h))\\
&\leq& C_{\bm V}^D \|\Pi_1{\bm u} - {\bm{u}}\|_1 \|\Pi_1{\bm u} - \bar{\bm{u}}_h\|_1 + \alpha\|p-\bar{p}_h\| \|\Pi_1{\bm u} - \bar{\bm{u}}_h\|_1.
\end{eqnarray*}
Then, it follows that 
\begin{equation*}
\| \rho_{\bm{u}} \|_1 \leq \left(1 + \frac{C^D_{\bm V}}{\alpha_{\bm V}^D} \right)\| \bm{u} - \Pi_1\bm{u} \|_1 + \frac{\alpha}{\alpha_{\bm V}^D}  \| \rho_p \|
\end{equation*}
The error estimate in \eqref{ine:rho_u} is obtained by using \eqref{ine:rho_p} and the standard error estimates for linear finite elements.
\end{proof}
We similarly define the elliptic projection,
$\overline{\partial_t \bm{u}_h}$, $\overline{\partial_t \bm{w}_h}$, and
$\overline{\partial_t p_h}$ of $\partial_t \bm{u}_h$, $\partial_t \bm{w}_h$,
and $\partial_t p_h$ respectively. This gives similar estimates as
above
for $\partial_t \rho_{\bm{u}}$, $\partial_t \rho_{\bm{w}}$, and
$\partial_t \rho_p$, where on the right-hand side of the
inequalities we use norms of $\partial_t \bm{u}_h$,
$\partial_t \bm{w}_h$, and $\partial_t p_h$ instead of the norms of
$\bm{u}_h$, $\bm{w}_h$, and $p_h$ respectively.

Next, we estimate the errors, $e_{\bm{u}}$, $e_{\bm{w}}$, and
$e_{p}$ using the following norm,
\begin{equation*}
\| (\bm u_h, \bm w_h, p_h) \|_{\tau,h} := \left(  \| \bm u_h \|_1^2 + \tau \| \bm w_h \|_{K^{-1}\mu_f}^2 + \left(\frac{1}{M} + 1\right)\| p_h \|^2  \right)^{1/2},
\end{equation*}
where $\| \bm{w}_h \|_{K^{-1}\mu_f}^2:= (K^{-1}\mu_f \bm{w}_h, \bm{w}_h )$.


\begin{lemma} \label{lem:error-e}
Let $R_{\bm{u}}^j := \partial_t \bm{u}(t_j) -
\frac{\bar{\bm{u}}_h(t_j) - \bar{\bm{u}}_{h}(t_{j-1})}{\tau}$. Then,
\begin{equation} \label{ine:error-e}
\|  (e_{\bm u}^m, e_{\bm w}^m, e_p^m) \|_{\tau, h} \leq c \left(  \| e_{\bm u}^0 \|_1 + \frac{1}{M} \|e_p^0\| + \tau \sum_{j=1}^n \| R_{\bm u}^j \|_1   \right).
\end{equation}
\end{lemma}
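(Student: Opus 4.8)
The plan is to follow the classical Thom\'{e}e argument for parabolic problems, using the three decoupled elliptic projections $(\bar{\bm u}_h,\bar{\bm w}_h,\bar p_h)$ and the coercivity/inf-sup guaranteed by Stokes--Biot stability (Definition~\ref{d:stokes-biot-stable}), with $a^D$ in place of $a$ (legitimate by Lemma~\ref{l:global-spectral-equivalence}). First I would derive the error equations: evaluate \eqref{variational1}--\eqref{variational3} at $t=t_m$, test against discrete functions, subtract \eqref{discrete1s_D}--\eqref{discrete3s_D}, and insert the defining identities \eqref{eqn:elliptic-proj-u}--\eqref{eqn:elliptic-proj-p}. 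The projection errors $\rho_{\bm u},\rho_{\bm w},\rho_p$ then cancel everywhere except in the two time-derivative terms of the continuity equation (it is precisely \eqref{eqn:elliptic-proj-p}, $(\ddiv\bar{\bm w}_h,q_h)=(\ddiv\bm w,q_h)$, that removes $\rho_{\bm w}$ there). With $R_{\bm u}^m$ as in the statement and $R_p^m:=\partial_t p(t_m)-\tfrac1\tau(\bar p_h(t_m)-\bar p_h(t_{m-1}))$, this gives, for all $(\bm v_h,\bm r_h,q_h)\in\bm V_h\times\bm W_h\times Q_h$,
\begin{align*}
& a^D(e_{\bm u}^m,\bm v_h) = (\alpha e_p^m,\ddiv\bm v_h), \qquad (K^{-1}\mu_f e_{\bm w}^m,\bm r_h) = (e_p^m,\ddiv\bm r_h),\\
& \tfrac1M(\bar{\partial}_t e_p^m,q_h)+\alpha(\ddiv\bar{\partial}_t e_{\bm u}^m,q_h)+(\ddiv e_{\bm w}^m,q_h) = \alpha(\ddiv R_{\bm u}^m,q_h)+\tfrac1M(R_p^m,q_h).
\end{align*}

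Next I would test the first equation with $\bm v_h=\bar{\partial}_t e_{\bm u}^m$ to rewrite $\alpha(\ddiv\bar{\partial}_t e_{\bm u}^m,e_p^m)=a^D(e_{\bm u}^m,\bar{\partial}_t e_{\bm u}^m)$, test the second with $\bm r_h=e_{\bm w}^m$ to rewrite $(\ddiv e_{\bm w}^m,e_p^m)=\|e_{\bm w}^m\|_{K^{-1}\mu_f}^2$, and test the third with $q_h=e_p^m$, obtaining
\begin{equation*}
\tfrac1M(\bar{\partial}_t e_p^m,e_p^m)+a^D(e_{\bm u}^m,\bar{\partial}_t e_{\bm u}^m)+\|e_{\bm w}^m\|_{K^{-1}\mu_f}^2 = \alpha(\ddiv R_{\bm u}^m,e_p^m)+\tfrac1M(R_p^m,e_p^m).
\end{equation*}
Applying the elementary inequality $(\bar{\partial}_t\phi^m,\phi^m)\ge\tfrac1{2\tau}(\|\phi^m\|^2-\|\phi^{m-1}\|^2)$ (valid for any inner product and its induced norm) to the first two terms, multiplying by $\tau$ and summing $m=1,\dots,n'$ for arbitrary $n'\le n$, the telescoping yields
\begin{equation*}
\tfrac1{2M}\|e_p^{n'}\|^2+\tfrac12\|e_{\bm u}^{n'}\|_{a^D}^2+\tau\sum_{m=1}^{n'}\|e_{\bm w}^m\|_{K^{-1}\mu_f}^2 \le \tfrac1{2M}\|e_p^0\|^2+\tfrac12\|e_{\bm u}^0\|_{a^D}^2+\tau\sum_{m=1}^{n'}\big[\alpha(\ddiv R_{\bm u}^m,e_p^m)+\tfrac1M(R_p^m,e_p^m)\big].
\end{equation*}

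This energy bound controls only the $\tfrac1M$-weighted part of $\|e_p\|^2$, whereas $\|\cdot\|_{\tau,h}$ carries the $O(1)$ weight; to recover it I would use the Stokes inf-sup of $(\bm V_h,Q_h)$ (a component of Stokes--Biot stability) together with the first error equation: $\|e_p^m\|\lesssim\sup_{\bm v_h\neq\bm 0}\frac{(e_p^m,\ddiv\bm v_h)}{\|\bm v_h\|_1}=\tfrac1\alpha\sup_{\bm v_h\neq\bm 0}\frac{a^D(e_{\bm u}^m,\bm v_h)}{\|\bm v_h\|_1}\lesssim\|e_{\bm u}^m\|_1\lesssim\|e_{\bm u}^m\|_{a^D}$. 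Hence, writing $\bm e^m:=(e_{\bm u}^m,e_{\bm w}^m,e_p^m)$, the left side of the energy bound dominates $\|\bm e^{n'}\|_{\tau,h}^2$. Estimating the right side by Cauchy--Schwarz, $|(\ddiv R_{\bm u}^m,e_p^m)|\le\|R_{\bm u}^m\|_1\|e_p^m\|\lesssim\|R_{\bm u}^m\|_1\|\bm e^m\|_{\tau,h}$ and likewise for the $R_p$-term, and setting $E:=\max_{n'\le n}\|\bm e^{n'}\|_{\tau,h}$, I arrive at $E^2\lesssim\|e_{\bm u}^0\|_1^2+\tfrac1M\|e_p^0\|^2+\big(\tau\sum_{m=1}^n(\|R_{\bm u}^m\|_1+\tfrac1M\|R_p^m\|)\big)E$; solving this quadratic (a one-line discrete Gronwall) gives \eqref{ine:error-e}, up to the exact weight on $\|e_p^0\|$ and the subdominant $\tfrac1M\|R_p^m\|$-term, which carries a $1/M$ factor and, if an explicit bound is wanted, can be controlled via the projection-error inequalities of the preceding lemma applied to $\partial_t p,\partial_t\bm w$ together with the backward-Euler truncation error.

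The main obstacle is the mixed space--time coupling term $\alpha(\ddiv\bar{\partial}_t e_{\bm u}^m,e_p^m)$: it has no definite sign and does not telescope on its own, so it must be traded, via the elasticity error equation, for $a^D(e_{\bm u}^m,\bar{\partial}_t e_{\bm u}^m)$ — which is exactly the consistency property the elliptic projection is designed to provide. A secondary point is that the energy inequality is insensitive to the $O(1)$ pressure norm, so Stokes stability (not merely coercivity of $a^D$ and mixed-Poisson stability) must be invoked separately, as above. Everything else — the discrete telescoping identity, Cauchy--Schwarz on the consistency terms, and the final Gronwall absorption — is routine.
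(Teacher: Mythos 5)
Your proposal is correct and follows the paper's overall Thom\'{e}e-type skeleton: the same elliptic-projection splitting, the same error equations, and the same test functions $\bm v_h=\bar{\partial}_t e_{\bm u}^m$, $\bm r_h=e_{\bm w}^m$, $q_h=\pm e_p^m$ (your identity $(\bar{\partial}_t\phi^m,\phi^m)\ge\tfrac{1}{2\tau}(\|\phi^m\|^2-\|\phi^{m-1}\|^2)$ is just the symmetric form of the paper's Cauchy--Schwarz-plus-Young recursion). The genuine divergence is in the one non-routine step you correctly isolate: recovering the $O(1)$-weighted pressure from an energy identity that only sees $\tfrac1M\|e_p\|^2$. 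The paper uses the mixed-Poisson (RT0--P0) inf-sup with the Darcy error equation \eqref{eqn:ew} to get $\|e_p^m\|\le\bar c\,\|e_{\bm w}^m\|_{K^{-1}\mu_f}$, using it both to absorb the consistency term into $\tau\|e_{\bm w}^m\|^2_{K^{-1}\mu_f}$ and (implicitly) to bound the final pressure norm; you instead use the Stokes inf-sup of $(\bm V_h,Q_h)$ with the elasticity error equation \eqref{eqn:eu} to get $\|e_p^m\|\lesssim\alpha^{-1}\|e_{\bm u}^m\|_{a^D}$, followed by a max-over-steps Gronwall absorption. Both routes are licensed by Stokes--Biot stability. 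Yours buys a controlling quantity, $\|e_{\bm u}^m\|^2_{a^D}$, that sits on the left with an $O(1)$ telescoping weight (whereas $\|e_{\bm w}^m\|^2_{K^{-1}\mu_f}$ carries only a factor $\tau$, and the constant $\bar c$ in \eqref{ine:infsup-ep} hides a dependence on $K^{-1}\mu_f$), so the passage to the $(1+\tfrac1M)\|e_p^n\|^2$ term is cleaner; the price is a constant proportional to $\alpha^{-1}$ and the global max/Gronwall step, which the paper avoids. Finally, your retention of $R_p^m=\partial_t p(t_m)-\bar{\partial}_t\bar p_h(t_m)$ is more faithful to the actual error equation than \eqref{eqn:ep}, which silently drops it; since \eqref{ine:error-e} contains only $R_{\bm u}$, your argument proves the lemma up to this extra $1/M$-weighted term, exactly as you flag.
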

\begin{proof}
  Choosing ${\bm v} = {\bm v}_h$ in \eqref{variational1},
  ${\bm r} = {\bm r}_h$ in \eqref{variational2}, and $q = q_h$ in
  \eqref{variational3}, subtracting these equations from
  \eqref{discrete1s_D}, \eqref{discrete2s_D} and \eqref{discrete3s_D},
  and using the definition of elliptic projections given in
  \eqref{eqn:elliptic-proj-u}, \eqref{eqn:elliptic-proj-w}, and
  \eqref{eqn:elliptic-proj-p} yields,
\begin{align}
& a^D(e_{\bm u}^m, \bm v_h) - (\alpha e_{p}^m, \ddiv \bm v_h) =  0, \label{eqn:eu}\\
& (K^{-1}\mu_f e_{\bm w}^m, \bm r_h)_h - (e_p^m, \ddiv \bm r_h) = 0,\label{eqn:ew} \\
&-\left(\frac{1}{M}\bar{\partial}_t e_p^m, q_h \right) - (\alpha \ddiv \bar{\partial}_t e_{\bm u}^m, q_h) - (\ddiv e_{\bm w}^m, q_h) = - (\ddiv R_{\bm u}^m, q_h). \label{eqn:ep}
\end{align}
Then, choosing $\bm v_h = \bar{\partial}_t e_{\bm u}^m$,
$\bm r_h = e_{\bm w}^m$ and $q_h = -e_p^m$ in \eqref{eqn:eu},
\eqref{eqn:ew}, and \eqref{eqn:ep}, respectively, and adding these equations, yields,
\begin{align*}
\| e_{\bm u}^m \|_{a^D}^2 + \tau \| e_{\bm w}^m \|_{K^{-1}\mu_f}^2 + \frac{1}{M} \|e_p^m\|^2& \leq
\| e_{\bm u}^m \|_{a^D} \| e_{\bm u}^{m-1} \|_{a^D} + \frac{1}{M}\|e_p^m\| \|e_p^{m-1}\| + \tau \| \ddiv R_{\bm u}^m \| \| e_p^m \|.
\end{align*}
Using the inf-sup condition corresponding to the mixed formulation of
the Darcy problem with RT0-P0 and using the equality in \eqref{eqn:ew} gives,
\begin{equation} \label{ine:infsup-ep}
\| e_p^m \| \leq c \sup_{0 \neq \bm r_h \in \bm W_h} \frac{(e_p^m, \ddiv \bm r_h)}{\| \bm r_h \|_{{\bm H}(\ddiv)}} = c \sup_{0 \neq \bm r_h \in \bm W_h} \frac{(K^{-1}\mu_f e_{\bm w}^m, \bm r_h)}{\| \bm r_h \|_{{\bm H}(\ddiv)}} \leq \bar{c} \| e_{\bm w}^m \|_{K^{-1}\mu_f}.
\end{equation}
By applying $ab \leq \displaystyle\frac{a^2}{2}+\frac{ b^2}{2}$ and the bound in \eqref{ine:infsup-ep}, the following inequality holds,
\begin{equation*}
\| e_{\bm u}^m \|_{a^D}^2 + \tau \| e_{\bm w}^m \|_{K^{-1}\mu_f}^2 + \frac{1}{M} \|e_p^m\|^2 \leq \| e_{\bm u}^{m-1} \|_{a^D}^2 + \frac{1}{M} \|e_p^{m-1}\|^2 + c \tau \| R_{\bm u}^m \|_1^2. 
\end{equation*}
This implies by recursion that 
\begin{align*}
\| e_{\bm u}^m \|_{a^D}^2 + \tau \| e_{\bm w}^m \|_{K^{-1}\mu_f}^2 + \frac{1}{M} \|e_p^m\|^2 \leq \| e_{\bm u}^{0} \|_{a^D}^2 + \frac{1}{M} \|e_p^{0}\|^2 + c \tau \sum_{j=1}^{m}\| R_{\bm u}^j \|_1^2.
\end{align*}
From the coercivity and continuity of the bilinear form, $a^D(\cdot, \cdot)$, the estimate in \eqref{ine:error-e} is obtained. 
\end{proof}
Finally, following the same procedures of Lemma 8 in \cite{RGHZ2016}, we have
\begin{equation}\label{ine:Ru}
\sum_{j=1}^n \| R_{\bm u}^j \|_1 \leq c \left(  \int_0^{t_n} \| \partial_{tt} \bm u  \|_1 \mathrm{d}t + \frac{1}{\tau} \int_0^{t_n} \| \partial_t \rho_{\bm u} \|_1 \mathrm{d}t \right).
\end{equation}
Thus, we derive the following error estimates.
\begin{theorem}\label{thm:error}
  Let $\bm u$, $\bm w$, and $p$ be the solutions of
  \eqref{variational1}-\eqref{variational3} and $\bm u_h^n$,
  $\bm w_h^n $, and $p_h^n$ be the solutions of
  \eqref{discrete1s_D}-\eqref{discrete3s_D}. If the following regularity assumptions hold,
\begin{align*}
&\bm u(t) \in L^{\infty}\left((0, T], \mathbf{H}_0^1(\Omega) \right) \cap L^{\infty}\left((0, T], \mathbf{H}^2(\Omega) \right), \\
& \partial_t \bm u \in L^{1}\left((0, T], \mathbf{H}^2(\Omega) \right), \ \partial_{tt} \bm u \in L^{1}\left((0, T], \mathbf{H}^1(\Omega) \right), \\
&  \bm w(t) \in L^{\infty}\left((0, T], H_0(\ddiv, \Omega) \right) \cap L^{\infty}\left((0, T], \mathbf{H}^1(\Omega) \right), \\
& p \in L^{\infty}\left((0, T], H^1(\Omega) \right), \ \partial_t p \in L^{1}\left((0, T], H^1(\Omega) \right),
\end{align*}
then, 
\begin{align} \label{ine:error}
& \|( \bm u(t_n) - \bm u_h^n, \bm w(t_n) - \bm w_h^n, p(t_n) - p_h^n ) \|_{\tau, h}  \leq c \left\{ \| e_{\bm u}^0 \|_1 + \frac{1}{M} \|e_p^0\| +  \tau \int_{0}^{t_n} \| \partial_{tt} \bm u \|_1 \mathrm{d}t  \right. \nonumber  \\
&\qquad  \left. + h \left[  \| \bm u \|_2 + \tau^{1/2} \| \bm w\|_1 + \| \bm w\|_1 + \| p \|_1 + \int_0^{t_n} \left( \| \partial_t \bm u \|_2 + \| \partial_t p \|_1  \right) \mathrm{d}t \right] \right\}.
\end{align}
\end{theorem}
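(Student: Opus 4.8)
The plan is to follow the standard splitting of the error into the elliptic‑projection part and the ``discrete'' part, and then simply assemble the ingredients that the preceding lemmas have already prepared. First I would use the triangle inequality in the norm $\|\cdot\|_{\tau,h}$ applied to the decompositions $\bm u(t_n)-\bm u_h^n = \rho_{\bm u}^n - e_{\bm u}^n$, $\bm w(t_n)-\bm w_h^n = \rho_{\bm w}^n - e_{\bm w}^n$, $p(t_n)-p_h^n = \rho_p^n - e_p^n$, to get
\begin{align*}
\|(\bm u(t_n)-\bm u_h^n,\, \bm w(t_n)-\bm w_h^n,\, p(t_n)-p_h^n)\|_{\tau,h}
&\le \|(\rho_{\bm u}^n,\rho_{\bm w}^n,\rho_p^n)\|_{\tau,h} \\
&\quad + \|(e_{\bm u}^n,e_{\bm w}^n,e_p^n)\|_{\tau,h}.
\end{align*}
For the first term I would expand the definition of $\|\cdot\|_{\tau,h}$ and insert the elliptic‑projection estimates \eqref{ine:rho_u}--\eqref{ine:rho_p} (the weighted velocity contribution $\|\rho_{\bm w}^n\|_{K^{-1}\mu_f}$ being controlled by $\|\rho_{\bm w}^n\|$ up to the data), which immediately produces a bound of the form $c\,h\big(\|\bm u\|_2 + \|p\|_1 + \|\bm w\|_1 + \tau^{1/2}\|\bm w\|_1\big)$, accounting for the bracketed $h$‑terms on the right‑hand side of \eqref{ine:error}.

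Second, for $\|(e_{\bm u}^n,e_{\bm w}^n,e_p^n)\|_{\tau,h}$ I would invoke Lemma~\ref{lem:error-e} verbatim, obtaining the bound $c\big(\|e_{\bm u}^0\|_1 + \frac{1}{M}\|e_p^0\| + \tau\sum_{j=1}^n\|R_{\bm u}^j\|_1\big)$, and then estimate $\tau\sum_{j=1}^n\|R_{\bm u}^j\|_1$ by means of \eqref{ine:Ru}. The point is that the prefactor $\tau$ exactly cancels the $1/\tau$ in the second term of \eqref{ine:Ru}, so that
\[
\tau\sum_{j=1}^n\|R_{\bm u}^j\|_1 \le c\Big(\tau\int_0^{t_n}\|\partial_{tt}\bm u\|_1\,\mathrm{d}t + \int_0^{t_n}\|\partial_t\rho_{\bm u}\|_1\,\mathrm{d}t\Big).
\]
It then remains to control $\int_0^{t_n}\|\partial_t\rho_{\bm u}\|_1\,\mathrm{d}t$: since the elliptic projections in \eqref{eqn:elliptic-proj-u}--\eqref{eqn:elliptic-proj-p} are linear and decoupled, $\partial_t\rho_{\bm u}$ is the elliptic‑projection error of $\partial_t\bm u$, so the $\partial_t$ analogue of \eqref{ine:rho_u} (already recorded after its proof) gives $\|\partial_t\rho_{\bm u}\|_1 \le c\,h(\|\partial_t\bm u\|_2 + \|\partial_t p\|_1)$, hence $\int_0^{t_n}\|\partial_t\rho_{\bm u}\|_1\,\mathrm{d}t \le c\,h\int_0^{t_n}(\|\partial_t\bm u\|_2 + \|\partial_t p\|_1)\,\mathrm{d}t$.

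Adding the two contributions yields precisely \eqref{ine:error}; the regularity hypotheses of the theorem are exactly what is needed so that every term on the right‑hand side — the $L^\infty$‑in‑time norms of $\bm u$, $\bm w$, $p$ and the $L^1$‑in‑time norms of $\partial_t\bm u$, $\partial_t p$, $\partial_{tt}\bm u$ — is finite. I do not expect a genuine obstacle here: the hard work (the spectral equivalence of $a^D$ and $a$ in Lemma~\ref{l:global-spectral-equivalence}, the uniform inf‑sup stability of the perturbed scheme from Theorem~\ref{t:stable1}, and the recursion argument in Lemma~\ref{lem:error-e}) has all been done beforehand. The only delicate points are bookkeeping ones: tracking which constants may depend on the physical parameters through the weight $K^{-1}\mu_f$ and the Biot modulus $M$ (and confirming they do not enter $c$ in the uniform estimate), and making sure the powers of $\tau$ and $\tau^{1/2}$ land in the right places when translating between $\|\cdot\|_\tau$, $\|\cdot\|_{\tau,h}$ and the bracketed right‑hand side of \eqref{ine:error}.
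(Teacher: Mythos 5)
Your proposal is correct and follows essentially the same route as the paper: the paper's proof of Theorem~\ref{thm:error} consists precisely of combining \eqref{ine:error-e}, \eqref{ine:Ru}, the projection estimates \eqref{ine:rho_u}--\eqref{ine:rho_p} (and their $\partial_t$ analogues), and the triangle inequality, which is exactly the assembly you describe. Your write-up is simply a more detailed expansion of the same argument, including the correct cancellation of $\tau$ against the $1/\tau$ in \eqref{ine:Ru}.
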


\begin{proof}
  The error estimate follows directly from
  \eqref{ine:error-e}, \eqref{ine:Ru},
  \eqref{ine:rho_u}-\eqref{ine:rho_p}, and the triangle inequality.
\end{proof}
\subsection{Practical implementation}\label{sec:implementation}
Since $d_{b}(\cdot,\cdot)$ has a diagonal matrix representation, we
can eliminate the degrees of freedom corresponding to the bubble
functions in order to have the same degrees of freedom as in the 
original P1-RT0-P0 method for the three-field formulation of the
poroelasticity problem. 
After eliminating such unknowns from \eqref{diagonal_block_form}, we obtain a
$(3\times 3)$ block discrete linear system with similar blocks:
\begin{equation}\label{block_form_elim}
\widehat{\mathcal{A}}^D = 
\left(
  \begin{array}{ccc}
A_{ll}-A_{bl}^T D_{bb}^{-1}A_{bl} & 0 & G_l-A_{bl}^T D_{bb}^{-1}G_b \\
0 & \tau M_w & \tau G \\
G_l^T- G_b^T D_{bb}^{-1} A_{bl} & \tau G^T & -M_p-G_b^T D_{bb}^{-1} G_{b} 
\end{array}
\right).
\end{equation}

\section{Stabilized P1-P0 discretization for the Stokes problem}
\label{sec:stokes}

When the permeability tends to zero in the poroelasticity problem, a
Stokes-type problem is obtained. Thus, all the results
obtained above can be directly applied to Stokes'
equations. In particular, after the elimination of the bubble functions, one
obtains a finite-element pair for the Stokes' system, based on piecewise linear finite
elements for the velocity and piecewise constant functions for the
pressure. This gives a Stokes-stable finite-element method with a 
minimum number of degrees of freedom.

To illustrate this further, consider the Stokes' problem for steady flow,
\begin{eqnarray}
-\ddiv(2\nu {\boldsymbol \varepsilon}({\bm u})-p {\bm I}) = {\bm f}, \ \ \hbox{in} \ \ \Omega, \label{stokes1}\\
\ddiv {\bm u} = 0, \ \ \hbox{in} \ \ \Omega,\label{stokes2} \\
{\bm u} = {\bm 0}, \ \ \hbox{on} \ \ \Gamma, \label{stokes3}
\end{eqnarray}
where ${\bm u}$ denotes the fluid velocity, $p$ is the pressure, $\nu$ is the viscosity constant, ${\bm f}\in (L^2(\Omega))^d$ is a given forcing term acting on the fluid, and ${\boldsymbol \varepsilon}({\bm u}) = \frac{1}{2}(\nabla {\bm u} + \nabla {\bm u}^t)$. By considering ${\bm V} = {\bm H}_0^1(\Omega) = \{{\bm u}\in {\bm H}^1(\Omega) \ | \ {\bm u} = {\bm 0} \ \hbox{on} \ \Gamma\}$ and $Q=L_0^2(\Omega)=L^2(\Omega)\slash{\mathbb R}$ as the subspace of $L^2(\Omega)$ consisting of functions with zero mean value on $\Omega$, we write the weak formulation of problem \eqref{stokes1}-\eqref{stokes3} as follows
\begin{eqnarray}
  && a^S(\bm{u},\bm{v}) - (p,\ddiv \bm{v}) = (\bm f,\bm{v}),
     \quad \forall \  \bm{v}\in \bm V, \label{variational1_S}\\
  &&  (\ddiv \bm{u},q)   = 0, \quad \forall \ q \in Q,\label{variational2_S}
\end{eqnarray}
where $a^S(\bm{u},\bm{v}) = \displaystyle a^S(\bm{u},\bm{v}) =
2\nu\int_{\Omega}{\boldsymbol \varepsilon}(\bm{u}):{	\boldsymbol \varepsilon}(\bm{v})$.
As in the previous sections, we introduce the following finite-dimensional subspaces. For velocity, let ${\bm V}_h$
be the space of piecewise linear elements enriched with the normal
components of face bubble functions. For pressure, let $Q_h$ be the
subspace of piecewise constant functions. Then, the discrete
variational formulation is given by:\\ 
Find $({\bm u}_h,p_h)\in {\bm V}_h \times Q_h$ such that 
\begin{eqnarray}
  && a^S(\bm{u}_h,\bm{v}_h) - (p_h,\ddiv \bm{v}_h) = (\bm f,\bm{v}_h),
     \quad \forall \  \bm{v}_h\in {\bm V}_h, \label{discrete1_S}\\
  &&  (\ddiv \bm{u}_h,q_h)   = 0. \quad \forall \ q_h \in Q_h,\label{discrete2_S}
\end{eqnarray}
This formulation gives rise to the following block form of the fully discrete problem,
\begin{equation}\label{block_form_S}
{\cal A}_S \left(
\begin{array}{c} 
{\bm U}_b \\ 
{\bm U}_l \\ 
{\bm P} 
\end{array}
\right) = 
{\bm b}, \ \ \hbox{with} \ \ 
{\cal A}_S = \left( 
\begin{array}{ccc} 
A_{bb} & A_{bl} & G_b \\ 
A_{bl}^T & A_{ll} & G_l \\ 
G_b^T & G_l^T & 0
\end{array} 
\right),
\end{equation}
where ${\bm U}_b$, ${\bm U}_l$, and ${\bm P}$ are the unknown vectors
corresponding to the bubble component of the velocity, the linear
component of the velocity, and the pressure, respectively.. With the aim of eliminating the degrees of freedom corresponding to the bubble functions, we replace $A_{bb}$ by a spectrally-equivalent diagonal matrix $D_{bb}$, obtaining the following block form of the coefficient matrix,
\begin{equation}\label{diagonal_block_form_S}
{\cal A}_S^D = \left( 
\begin{array}{ccc} 
D_{bb} & A_{bl} & G_b \\ 
A_{bl}^T & A_{ll} & G_l \\ 
G_b^T & G_l^T & 0
\end{array} 
\right).
\end{equation}
Finally, we eliminate unknowns corresponding to the bubbles to obtain
a 2 by 2 system,
\begin{equation}\label{block_form_elim_S}
\widehat{\cal A}_S^D = 
\left(
\begin{array}{ccc}
A_{ll}-A_{bl}^T D_{bb}^{-1}A_{bl} & G_l-A_{bl}^T D_{bb}^{-1}G_b \\
G_l^T- G_b^T D_{bb}^{-1} A_{bl} & -G_b^T D_{bb}^{-1} G_{b} 
\end{array}
\right).
\end{equation}
The resulting scheme is a stabilized P1-P0 discretization of Stokes
in which stabilization terms appear in every
sub-block. Optimal order error estimates for this stabilized scheme
follow from the estimates provided in~\cite[pp.~145-149]{GR1986} for
the pair of spaces $(\bm{V}_h,Q_h)$,
$\bm{V}_h = \bm{V}_{h,1} \oplus \bm{V}_b$.

\section{Numerical Results}
\label{sec:results}

In this section we illustrate the theoretical convergence results
obtained in previous sections. We present results for both the
poroelastic problem and for Stokes' equations.

\subsection{Poroelastic problem} \label{sec:poro_test} First we
consider the test included in Section \ref{sec:difficulties}, in order
to show the corresponding results when the stabilized P1-RT0-P0 is
considered.  Table \ref{errors_difficulties_Dbb} displays the energy
norm errors for displacement and $L^2$-norm errors for pressure obtained by
applying the scheme after diagonalizing the block
corresponding to the bubble functions, $\mathcal{A}^D$ (System \eqref{diagonal_block_form}). For this test, different values
of the parameter $\kappa$ and different mesh-sizes are considered to show
that the errors are appropriately reduced independently of the
physical parameters, in contrast to the original P1-RT0-P0 scheme (Table \ref{errors_difficulties}).
\begin{table}[htb!]
\begin{center}
\begin{tabular}{|c|c|c|c|c|c|c|}
\cline{3-7}
\multicolumn{2}{c|}{} & 
$N =8$ & $N =16$ & $N =32$ & $N =64$  & $N = 128$ \\
\hline
\multirow{2}{*}{$\kappa = 10^{-4}$} & $\|\Pi_1{\bm u}-{\bm u}_h\|_A$ & 
$0.0126$ & $0.0029$ & $0.0007$ & $0.0002$ & $4.70\times 10^{-5}$ \\
& $\|\Pi_0p-p_h\|_{L^2}$ & 
$0.0308$ & $0.0064$ & $0.0012$ & $0.0003$ & $6\times 10^{-5}$ \\
\hline
\multirow{2}{*}{$\kappa = 10^{-6}$} & $\|\Pi_1{\bm u}-{\bm u}_h\|_A$ & 
$0.0174$ & $0.0055$ & $0.0013$ & $0.0003$ & $7.04\times 10^{-5}$ \\
& $\|\Pi_0p-p_h\|_{L^2}$ & 
$0.0639$ & $0.0359$ & $0.0151$ & $0.0043$ & $7.93\times 10^{-4}$ \\
\hline
\multirow{2}{*}{$\kappa = 10^{-8}$} & $\|\Pi_1{\bm u}-{\bm u}_h\|_A$ & 
$0.0176$ & $0.0057$ & $0.0015$ & $0.0004$ & $1.05\times 10^{-4}$ \\ 
& $\|\Pi_0p-p_h\|_{L^2}$ & 
$0.0622$ & $0.0379$ & $0.0196$ & $0.0097$ & $0.0046$ \\
\hline
\multirow{2}{*}{$\kappa = 10^{-10}$} & $\|\Pi_1{\bm u}-{\bm u}_h\|_A$ & 
$0.0176$ & $0.0057$ & $0.0015$ & $0.0004$ & $1.08\times 10^{-4}$ \\ 
& $\|\Pi_0p-p_h\|_{L^2}$ & 
$0.0621$ & $0.0378$ & $0.0197$ & $0.0098$ & $0.0049$ \\
\hline
\end{tabular}
\caption{Energy norm for displacement errors and $L^2$-norm for
  pressure errors by considering different values of $\kappa$ and
  different mesh-sizes, using the ``diagonal'' bubble formulation, $\mathcal{A}^D$ \eqref{diagonal_block_form}.} 
\label{errors_difficulties_Dbb}
\end{center}
\end{table}

We also compare the obtained errors with those provided by the
fully enriched element, $\mathcal{A}$ (System \eqref{block_form}),
in order to see that the same error
reduction is achieved. Figure~\ref{figure_error_comparison}, 
displays a comparison of the displacement and pressure errors in the
energy and $L^2$ norms, respectively, for different grid sizes.  We choose $\kappa =10^{-8}$ here, though similar pictures are
obtained for different values of $\kappa$. We observe that
the slopes corresponding to both schemes are the same, although the
scheme corresponding to the diagonal version provides slightly worse
errors. However, this scheme, when the bubble block is eliminated, uses less degrees of freedom and is easily
implemented from an already existing P1-RT0-P0 code. 
\begin{figure}[htb]
\begin{center}
\begin{tabular}{cc}
\includegraphics[width = 0.4\textwidth ]{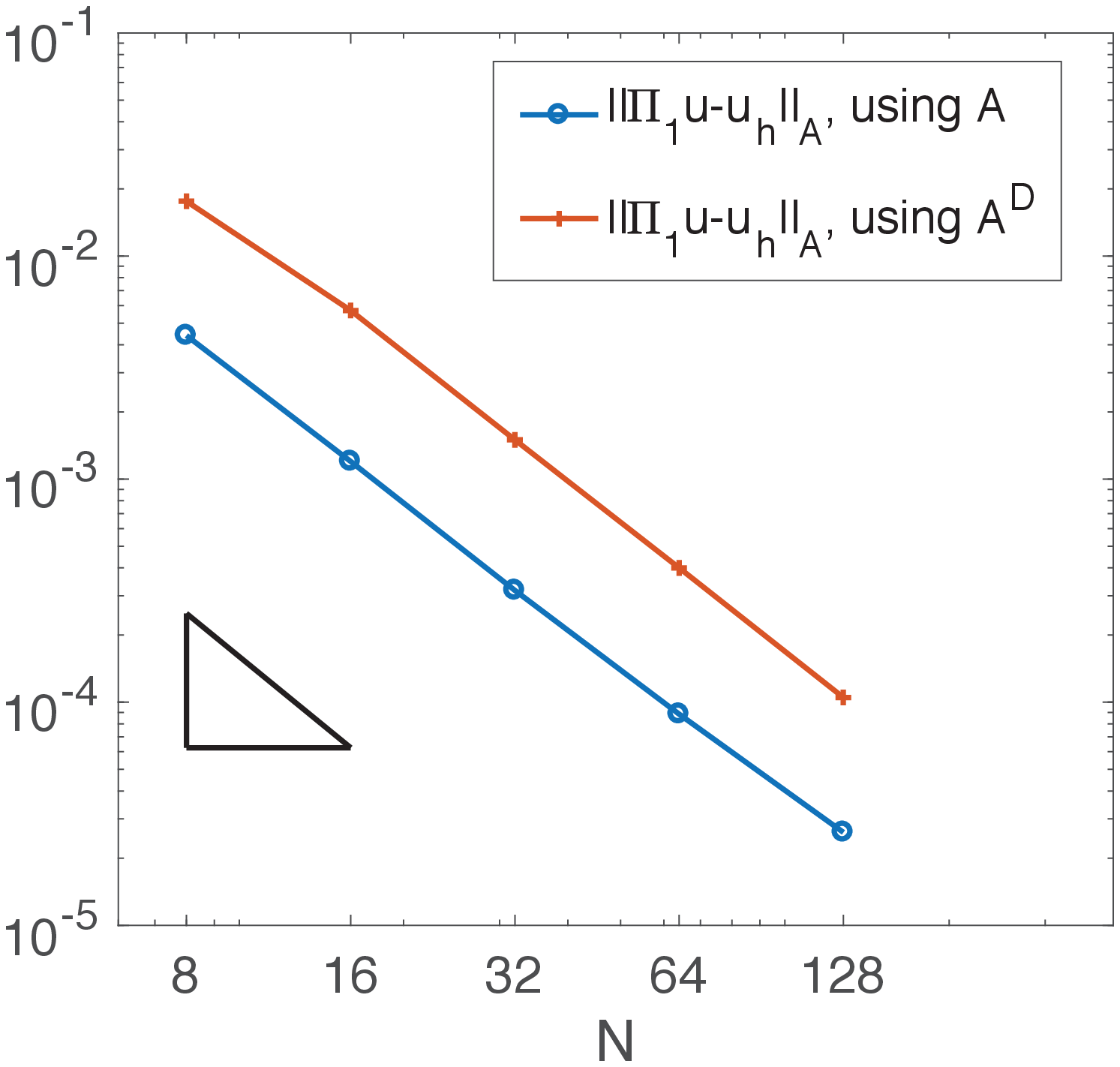}
&
\includegraphics[width = 0.42\textwidth ]{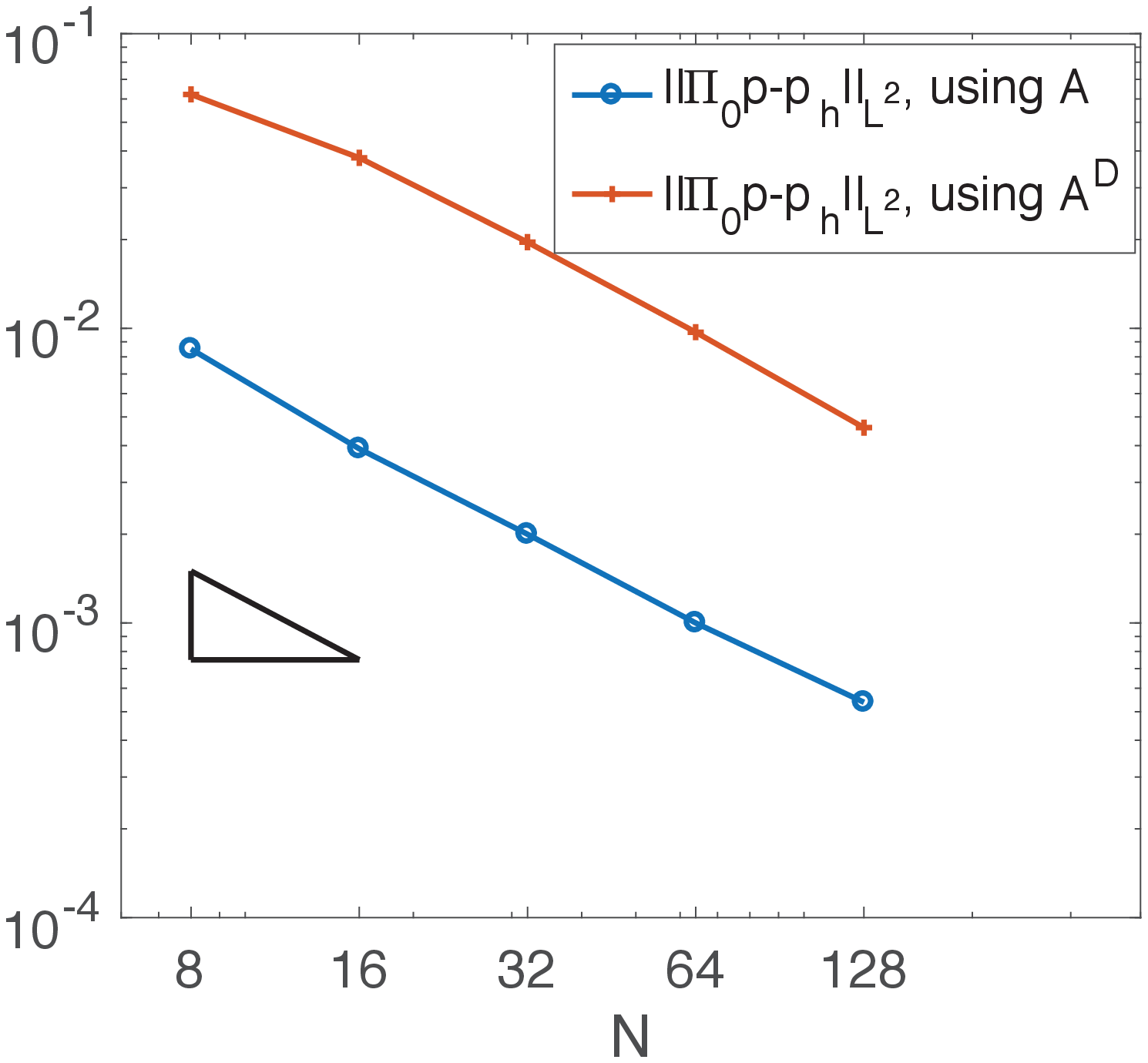}\\
(a) & (b)
\end{tabular}
\caption{Reduction of the (a) displacement and (b) pressure errors for
  different mesh-sizes, by using  the enriched finite element scheme,
  $\mathcal{A}$ \eqref{block_form}, as well as the scheme with
  diagonal block used for the bubble functions, $\mathcal{A}^D$ \eqref{diagonal_block_form}.}
\label{figure_error_comparison}
\end{center}
\end{figure}


\subsection{Stokes' problem} \label{sec:stokes_test}

While it is well-known that the P1-P0 finite element pair is not stable for
Stokes' equations, we show here that the new formulation, $\widehat{\mathcal{A}}^D_S$
\eqref{block_form_elim_S}, resulting from the elimination of the normal
components of the bubbles, does provides a stable method. Consider \eqref{stokes1}-\eqref{stokes3} on a unit square
$(0,1)\times (0,1)$, where the right-hand side ${\bm f}$ is chosen such that
the analytical solution is given by
$${\bm u}(x,y) = \left(\sin(\pi x)\cos(\pi y), -\cos(\pi x)\sin(\pi y)\right), \quad p(x,y) = 0.5 - x.$$
Figure \ref{errors_Stokes} compares the error reduction for both the
velocity and pressure using the bubble function enhanced schemes
described by $\mathcal{A}_S$ \eqref{block_form_S} and
$\mathcal{A}_S^D$ \eqref{diagonal_block_form_S}.
\begin{figure}[htb]
\begin{center}
\begin{tabular}{cc}
\includegraphics[height = 5.95cm ]{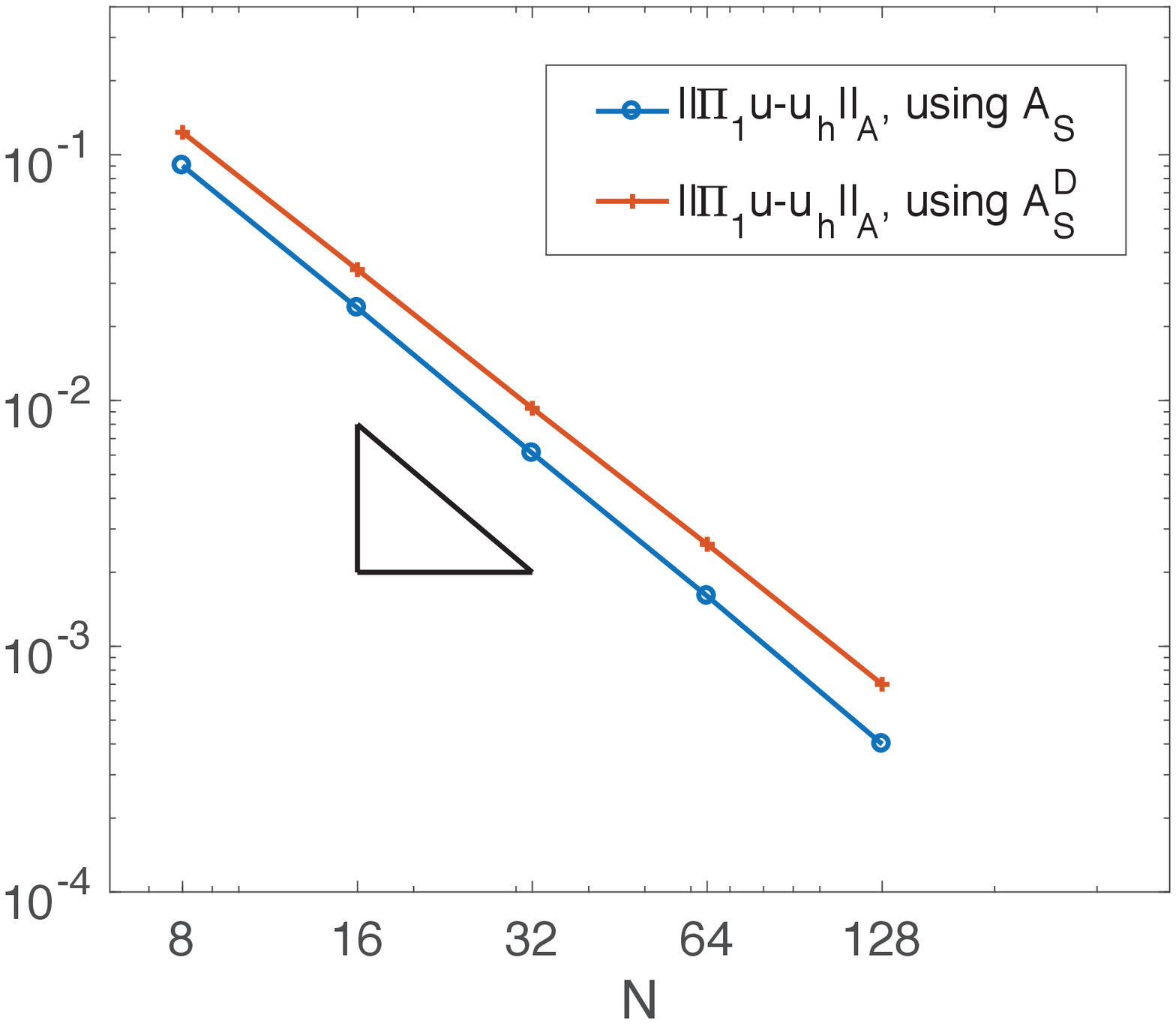}
&
\includegraphics[height = 5.9cm ]{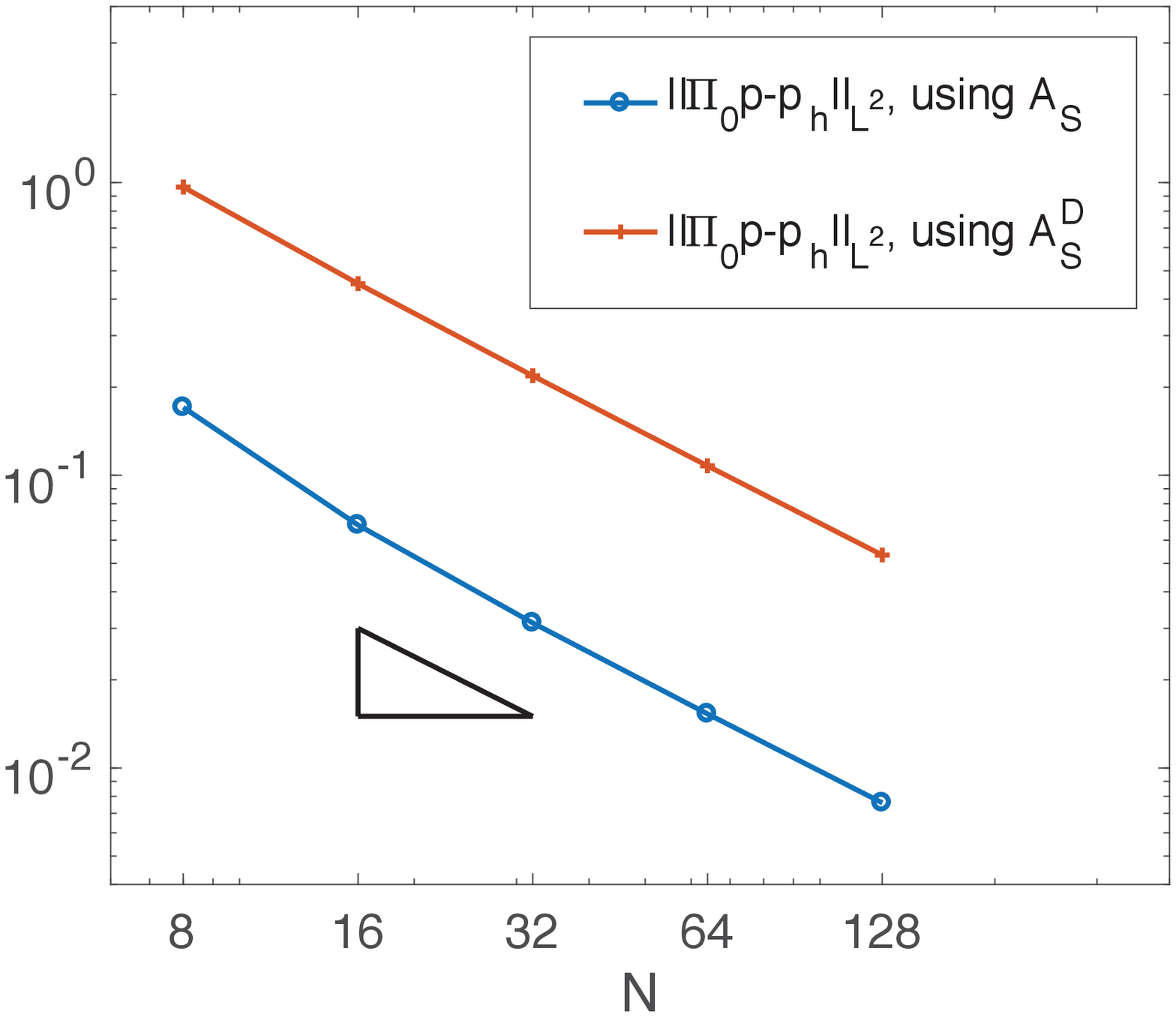}\\
(a) & (b)
\end{tabular}
\caption{Reduction of the (a) velocity and (b) pressure errors for
  different mesh-sizes, by using  the enriched finite element scheme,
  $\mathcal{A}_S$ \eqref{block_form_S}, as well as the scheme
  resulting from using a diagonal block for the bubble functions, $\mathcal{A}^D_S$ \eqref{diagonal_block_form_S}.}
\label{errors_Stokes}
\end{center}
\end{figure}
Here, for the scheme~$\mathcal{A}_S$ \eqref{block_form_S}, the energy
norm for the velocity is defined as $\| \bm{v} \|_A^2 := a^S(\bm{v},
\bm{v}) = 2\mu ({\boldsymbol \varepsilon}(\bm{v}), {\boldsymbol
  \varepsilon}(\bm{v})) $ for $\bm{v} \in \bm{V}$ .  In addition, for
the scheme~$\widehat{\mathcal{A}}_S^D$ \eqref{diagonal_block_form_S}, the energy norm is defined as $\| \bm{v} \|_A^2 := a^{S,D}(\bm{v}, \bm{v})$ where $a^{S,D}(\bm{u}, \bm{v})$ is defined as in~\eqref{ad-form} with $a(\cdot, \cdot)$ replaced by $a^S(\cdot, \cdot)$.  Both methods give the same, optimal, order of convergence,
demonstrating that the
inclusion of the bubble functions guarantee the stability of the
method.  Moreover, though the errors are slightly
higher, the elimination of the bubble functions would provide a
stable convergent method, but reduces the problem to one that contains the
same number of degrees of freedom as the P1-P0 discretization.  Thus,
we get a stable scheme with no increase in cost.  

\section{Conclusions}
\label{sec:conclusions}
In this paper, we have shown how to stabilize the popular P1-RT0-P0
finite-element discretization for a three-field formulation of the
poroelasticity problem. By adding the normal
components of the bubble basis functions associated with the faces of
the triangulation to the P1 element for displacements, we have
demonstrated that an inf-sup condition is satisfied independently of the
physical and discretization parameters of the problem. Moreover, the degrees of freedom
added to the faces are eliminated resulting in a stable scheme with
the same number of unknowns as in the initial P1-RT0-P0
discretization. Furthermore, this idea has been extended to the
Stokes' equations, yielding a
\textit{stable finite-element formulation} with the lowest
possible number of degrees of freedom, \textit{equivalent to a P1-P0
discretization}.  Future work includes investigating such formulations
and their performance for various applications in poroelasticity, and
extending the discretization to other PDE systems which have simliar
properties to the Stokes' equations.

\section*{Acknowledgements}

The work of F.~J.~Gaspar is supported by the European Union's Horizon
2020 research and innovation programme under the Marie
Sklodowska-Curie grant agreement NO 705402, POROSOS. The research of
C.~Rodrigo is supported in part by the Spanish project FEDER /MCYT
MTM2016-75139-R and the DGA (Grupo consolidado PDIE). The work of
Zikatanov was partially supported by NSF grants DMS-1418843 and
DMS-1522615.  The work of Adler and Hu was partially supported by NSF
grant DMS-1620063.

\bibliographystyle{elsarticle-num}
\bibliography{bib_Poro_P1b_RT0_P0}

\begin{thebibliography}{10}
\expandafter\ifx\csname url\endcsname\relax
  \def\url#1{\texttt{#1}}\fi
\expandafter\ifx\csname urlprefix\endcsname\relax\def\urlprefix{URL }\fi
\expandafter\ifx\csname href\endcsname\relax
  \def\href#1#2{#2} \def\path#1{#1}\fi

\bibitem{terzaghi}
K.~Terzaghi, Theoretical Soil Mechanics, Wiley: New York, 1943.

\bibitem{biot1}
M.~A. Biot, General theory of three-dimensional consolidation, Journal of
  Applied Physics 12~(2) (1941) 155--164.

\bibitem{biot2}
M.~A. Biot, Theory of elasticity and consolidation for a porous anisotropic
  solid, Journal of Applied Physics 26~(2) (1955) 182--185.

\bibitem{showalter}
R.~Showalter, Diffusion in poro-elastic media, Journal of Mathematical Analysis
  and Applications 251~(1) (2000) 310 -- 340.

\bibitem{zenisek}
A.~{\v{Z}}en{\'{\i}}{\v{s}}ek, The existence and uniqueness theorem in {B}iot's
  consolidation theory, Apl. Mat. 29~(3) (1984) 194--211.

\bibitem{Gaspar2003}
F.~J. Gaspar, F.~J. Lisbona, P.~N. Vabishchevich, A finite difference analysis
  of {B}iot's consolidation model, Appl. Numer. Math. 44~(4) (2003) 487--506.
\newblock \href {http://dx.doi.org/10.1016/S0168-9274(02)00190-3}
  {\path{doi:10.1016/S0168-9274(02)00190-3}}.

\bibitem{Gaspar2006}
F.~J. Gaspar, F.~J. Lisbona, P.~N. Vabishchevich, Staggered grid
  discretizations for the quasi-static {B}iot's consolidation problem, Appl.
  Numer. Math. 56~(6) (2006) 888--898.
\newblock \href {http://dx.doi.org/10.1016/j.apnum.2005.07.002}
  {\path{doi:10.1016/j.apnum.2005.07.002}}.

\bibitem{nordbotten_FVM}
J.~M. Nordbotten, Stable cell-centered finite volume discretization for {B}iot
  equations, SIAM Journal on Numerical Analysis 54~(2) (2016) 942--968.

\bibitem{LewisSchrefler}
R.~Lewis, B.~Schrefler, The Finite Element Method in the Static and Dynamic
  Deformation and Consolidation of Porous Media, Wiley: New York, 1998.

\bibitem{MuradLoula92}
M.~A. Murad, A.~F.~D. Loula, Improved accuracy in finite element analysis of
  {B}iot's consolidation problem, Comput. Methods Appl. Mech. Engrg. 95~(3)
  (1992) 359--382.
\newblock \href {http://dx.doi.org/10.1016/0045-7825(92)90193-N}
  {\path{doi:10.1016/0045-7825(92)90193-N}}.

\bibitem{MuradLoula94}
M.~A. Murad, A.~F.~D. Loula, On stability and convergence of finite element
  approximations of {B}iot's consolidation problem, Internat. J. Numer. Methods
  Engrg. 37~(4) (1994) 645--667.
\newblock \href {http://dx.doi.org/10.1002/nme.1620370407}
  {\path{doi:10.1002/nme.1620370407}}.

\bibitem{MuradLoulaThome}
M.~A. Murad, V.~Thom{\'e}e, A.~F.~D. Loula, Asymptotic behavior of semidiscrete
  finite-element approximations of {B}iot's consolidation problem, SIAM J.
  Numer. Anal. 33~(3) (1996) 1065--1083.
\newblock \href {http://dx.doi.org/10.1137/0733052}
  {\path{doi:10.1137/0733052}}.

\bibitem{RGHZ2016}
C.~Rodrigo, F.~Gaspar, X.~Hu, L.~Zikatanov, Stability and monotonicity for some
  discretizations of the {B}iot's consolidation model, Computer Methods in
  Applied Mechanics and Engineering 298 (2016) 183--204.

\bibitem{Hu2017}
X.~Hu, C.~Rodrigo, F.~J. Gaspar, L.~T. Zikatanov, A nonconforming finite
  element method for the {B}iot's consolidation model in poroelasticity,
  Journal of Computational and Applied Mathematics 310 (2017) 143 -- 154.
\newblock \href {http://dx.doi.org/http://doi.org/10.1016/j.cam.2016.06.003}
  {\path{doi:http://doi.org/10.1016/j.cam.2016.06.003}}.

\bibitem{lee_four_field}
J.~J. Lee, Robust error analysis of coupled mixed methods for {B}iot's
  consolidation model, Journal of Scientific Computing 69~(2) (2016) 610--632.

\bibitem{johannes}
Q.~Hong, J.~Kraus, Parameter-robust stability of classical three-field
  formulation of {B}iot's consolidation model, Submitted, preprint available in
  arXiv/1706.00724v1.

\bibitem{phillips1}
P.~Phillips, M.~Wheeler, A coupling of mixed and continuous {G}alerkin finite
  element methods for poroelasticity {I}: the continuous in time case,
  Computational Geosciences 11~(2) (2007) 131--144.
\newblock \href {http://dx.doi.org/10.1007/s10596-007-9045-y}
  {\path{doi:10.1007/s10596-007-9045-y}}.

\bibitem{phillips2}
P.~Phillips, M.~Wheeler, A coupling of mixed and continuous {G}alerkin finite
  element methods for poroelasticity {II}: the discrete-in-time case,
  Computational Geosciences 11~(2) (2007) 145--158.
\newblock \href {http://dx.doi.org/10.1007/s10596-007-9044-z}
  {\path{doi:10.1007/s10596-007-9044-z}}.

\bibitem{phillips3}
P.~Phillips, M.~Wheeler, A coupling of mixed and discontinuous {G}alerkin
  finite-element methods for poroelasticity, Computational Geosciences 12~(4)
  (2008) 417--435.
\newblock \href {http://dx.doi.org/10.1007/s10596-008-9082-1}
  {\path{doi:10.1007/s10596-008-9082-1}}.

\bibitem{Castelleto2016}
N.~Castelleto, J.~A. White, M.~Ferronato, Scalable algorithms for three-field
  mixed finite element coupled poromechanics, Journal of Computational Physics
  327 (2016) 894 -- 918.

\bibitem{Bause}
M.~Bause, F.~Radu, U.~Kocher, Space-time finite element approximation of the
  {B}iot poroelasticity system with iterative coupling, Computer Methods in
  Applied Mechanics and Engineering 320 (2017) 745 -- 768.
\newblock \href {http://dx.doi.org/http://doi.org/10.1016/j.cma.2017.03.017}
  {\path{doi:http://doi.org/10.1016/j.cma.2017.03.017}}.

\bibitem{Almani}
T.~Almani, K.~Kumar, A.~Dogru, G.~Singh, M.~Wheeler, Convergence analysis of
  multirate fixed-stress split iterative schemes for coupling flow with
  geomechanics, Comput. Methods Appl. Mech. Engrg. 311~(1) (2016) 180 -- 207.

\bibitem{lipnikov_phd}
K.~Lipnikov, Numerical methods for the {B}iot model in poroelasticity, Ph.D.
  thesis, University of Houston (2002).

\bibitem{1991BrezziF_FortinM-aa}
F.~Brezzi, M.~Fortin, Mixed and hybrid finite element methods, Vol.~15 of
  Springer Series in Computational Mathematics, Springer-Verlag, New York,
  1991.
\newblock \href {http://dx.doi.org/10.1007/978-1-4612-3172-1}
  {\path{doi:10.1007/978-1-4612-3172-1}}.

\bibitem{2008BoffiD_BrezziF_DemkowiczL_DuranR_FalkR_FortinM-aa}
D.~Boffi, F.~Brezzi, L.~F. Demkowicz, R.~G. Dur\'an, R.~S. Falk, M.~Fortin,
  Mixed finite elements, compatibility conditions, and applications, Vol. 1939
  of Lecture Notes in Mathematics, Springer-Verlag, Berlin; Fondazione
  C.I.M.E., Florence, 2008, {L}ectures given at the C.I.M.E. Summer School held
  in Cetraro, June 26--July 1, 2006, Edited by Boffi and Lucia Gastaldi.
\newblock \href {http://dx.doi.org/10.1007/978-3-540-78319-0}
  {\path{doi:10.1007/978-3-540-78319-0}}.

\bibitem{2013BoffiD_BrezziF_FortinM-aa}
D.~Boffi, F.~Brezzi, M.~Fortin, Mixed finite element methods and applications,
  Vol.~44 of Springer Series in Computational Mathematics, Springer,
  Heidelberg, 2013.
\newblock \href {http://dx.doi.org/10.1007/978-3-642-36519-5}
  {\path{doi:10.1007/978-3-642-36519-5}}.

\bibitem{GR1986}
V.~Girault, P.-A. Raviart, Finite element methods for {N}avier-{S}tokes
  equations, Vol.~5 of Springer Series in Computational Mathematics,
  Springer-Verlag, Berlin, 1986, {T}heory and algorithms.
\newblock \href {http://dx.doi.org/10.1007/978-3-642-61623-5}
  {\path{doi:10.1007/978-3-642-61623-5}}.

\bibitem{2017HuX_RodrigoC_GasparF_ZikatanovL-aa}
X.~Hu, C.~Rodrigo, F.~J. Gaspar, L.~T. Zikatanov, A nonconforming finite
  element method for the {B}iot's consolidation model in poroelasticity,
  Journal of Computational and Applied Mathematics 310 (2017) 143--154.
\newblock \href {http://dx.doi.org/10.1016/j.cam.2016.06.003}
  {\path{doi:10.1016/j.cam.2016.06.003}}.

\bibitem{1908KornA-aa}
A.~Korn, Solution general du probleme d'equilibre dans la theorie de
  l'elasticite, Annales de la Faculte de Sciences de Toulouse 10 (1908)
  705--724.

\bibitem{1909KornA-aa}
A.~Korn, Ueber einige ungleichungen, welche in der theorie der elastischen und
  elektrischen schwingungen eine rolle spielen, Bulletin internationale de
  l'Academie de Sciences de Cracovie, 9 (1909) 705--724.

\bibitem{1989KondratievV_OleinikO-aa}
V.~A. Kondratiev, O.~A. Oleinik, On {K}orn's inequalities, C. R. Acad. Sci.
  Paris S\'er. I Math. 308~(16) (1989) 483--487.
\newblock \href {http://dx.doi.org/10.1070/RM1989v044n06ABEH002297}
  {\path{doi:10.1070/RM1989v044n06ABEH002297}}.

\bibitem{1989KondratievV_OleinikO-ab}
V.~A. Kondratiev, O.~A. Oleinik, Dependence of the constants in the {K}orn
  inequality on parameters that characterize the geometry of the domain,
  Uspekhi Mat. Nauk 44~(6(270)) (1989) 157--158.
\newblock \href {http://dx.doi.org/10.1070/RM1989v044n06ABEH002297}
  {\path{doi:10.1070/RM1989v044n06ABEH002297}}.

\bibitem{1978CiarletP-aa}
P.~G. Ciarlet, The finite element method for elliptic problems, North-Holland
  Publishing Co., Amsterdam-New York-Oxford, 1978, {S}tudies in Mathematics and
  its Applications, Vol. 4.

\bibitem{2006ThomeeV-aa}
V.~Thom{\'e}e, {G}alerkin finite element methods for parabolic problems, 2nd
  Edition, Vol.~25 of Springer Series in Computational Mathematics,
  Springer-Verlag, Berlin, 2006.

\end{thebibliography}

\end{document}